\theoremstyle{plain}
\newtheorem*{theorem*}{Theorem}
\renewcommand{\epsilon}{\varepsilon}
\newcommand{\N}{{\mathbb N}}
\newcommand{\R}{{\mathbb R}}
\numberwithin{equation}{subsection}
\newtheorem{theo}[equation]{{\sc Theorem}}
\newtheorem{question}[equation]{{\sc Question}}
\newtheorem{conj}[equation]{{\sc Conjecture}}
\newtheorem{lem}[equation]{{\sc Lemma}}
\newtheorem{prop}[equation]{{\sc Proposition}}
\theoremstyle{definition}
\newtheorem{defn}[equation]{{\sc Definition}}
\theoremstyle{remark}
\newtheorem{rem}[equation]{Remark}
\theoremstyle{assumption}
\newtheorem{goal}[equation]{Goal}
\newtheorem{strategy}[equation]{\color{blue}{Strategy}}
\title[Distribution of tangents]{Tangent nodal sets for random spherical harmonics}
\author{Suresh Eswarathasan}
\address{(Permanent address) School of Mathematics, Cardiff University, Senghennyd Road, Cardiff, Wales CF24 4AG United Kingdom}
\email{eswarathasans@cardiff.ac.uk}
\address{(Current address) Department of Mathematics and Statistics, McGill University, Burnside Hall Rm 1208, 805 Sherbrooke Street West, Montr\'eal, Qu\'ebec H3A 0B9 Canada}
\email{suresh.eswarathasan@mcgill.ca}
\date{August 30, 2018}
\begin{document}

\begin{abstract}
In this note, we consider a fixed vector field $V$ on $S^2$ and study the distribution of points which lie on the nodal set (of a random spherical harmonic) where $V$ is also tangent.  We show that the expected value of the corresponding counting function is asymptotic to the eigenvalue with a leading coefficient that is independent of the vector field $V$.  This demonstrates, in some form, a universality for vector fields up to lower order terms.
\end{abstract}

\maketitle

\section{Introduction}
\subsection{Nodal Sets}
In the 18th century Ernest Chladni first described nodal sets during his study of modes of vibration on a rigid surface: the observed nodal pattern corresponds to the sets that remain stationary during vibrations.  To some extent, the manner in which these patterns develop as the frequency of the mode becomes larger has remained an enigma to mathematicians. 

Consider a compact boundaryless Riemannian manifold $(M,g)$ and the spectrum of the corresponding Laplace-Beltrami operator $-\Delta_g$ which we order as $\lambda_0^2 = 0 < \lambda_1^2 \leq \lambda_2^2 \leq \dots$ and tends to infinity.  Let us denote a corresponding $L^2$ orthonormal basis by $\{\varphi_j\}_j$ and let $Z(\varphi_j) = \{ x \in M: \varphi_j(x)=0 \}$ be its nodal set. Courant demonstrated in the 1920s that the number of connected components of $M\backslash Z(\varphi_j)$, commonly referred to as \textit{nodal domains}, is bounded above by a uniform constant times $j$.  The study of the nodal set and nodal domains of eigenfunctions under various assumptions is a well developed area of research and has demonstrated a number of connections to other areas of mathematics and mathematical physics; for more information, see the ICM article of Nazarov-Sodin \cite{NS10}.

In particular, much more can be said in settings which exhibit some rigid structure such as the torus or the sphere. In this note, we will consider an aspect of nodal sets (motivated by a question pertaining to nodal domains) for certain kinds of eigenfunctions on the 2-sphere $S^2$.

\subsection{Spherical Harmonics}
Consider the 2-dimensional sphere $S^2$, its positive Laplace-Beltrami operator $-\Delta_{g}$ where $g$ is the round metric, and normalized volume measure $dV_g$. Consider the eigenfunction equation
\begin{equation*}
-\Delta_g \varphi_{l} = \lambda_l^2 \varphi_{l},
\end{equation*}
where $l \in \N$.  Let $E_l$ denote the eigenspace for the eigenvalue $\lambda_l^2$.  We note that the eigenvalues $\lambda_l$ on the sphere are explicit and have large multiplicities, with the formulas for them being
\begin{equation*}
\lambda_l = l(l+1) \text{ and } \mathcal{N}_l= \dim E_{\lambda_l} = 2l+1.
\end{equation*}
Given $l$, we fix an $L^2$-orthonormal basis for $E_l$ which we denote by $\{ \varphi_{l,k} \}_{k=1}^{\mathcal{N}_l}$ and results in the identification $\R^{\mathcal{N}_l} \simeq E_l$.  In particular, using standard spherical coordinates on $S^2 \subset \R^3$, we set $\varphi_{l,k}:= e^{i k \theta} P_l^m(\cos \phi)$ where $P_l^m$ is the associated Legendre polynomial of degree $(l,m)$; this basis is commonly known as \textit{ultraspherical}.  For further reading on spherical harmonics we refer the reader to \cite{AAR99}.

\subsection{Random Model}
Next, we consider random eigenfunctions, that is, functions of the form
\begin{equation*}
f_l(x) = \sqrt{\frac{4 \pi}{\mathcal{N}_l}} \sum_{k=1}^{\mathcal{N}_l} a_k \varphi_{l,k}(x)
\end{equation*}
where $a_k$ are Gaussian $N(0,1)$ i.i.d. random variables.  Thanks to our identification of $E_l$ with $\R^{\mathcal{N}_l}$, we can put a Gaussian measure $\nu$ on $E_n$ with the expression
\begin{equation*}
d\nu_l(f) = \exp(-\|\overrightarrow{a}\|^2/2) \frac{da_1 da_2 \cdots da_{\mathcal{N}_l}}{(2 \pi)^{\mathcal{N}_l/2}},
\end{equation*}
where $\overrightarrow{a} = (a_k)_{k=1}^{\mathcal{N}_l} \in \mathbb{R}^{\mathcal{N}_l}$.  Note that the measure $\nu$ does not depend on our chosen basis $\{\varphi_{l,k}\}_k$.  Moreover, we see that $\mathbb{E}_l\left[f_l^2 \right]=1$ as an immediate consequence of the addition theorem for spherical harmonics.

Finally, we can consider the product measure $\nu := \otimes_{l=1}^{\infty} \nu_l$ on the space 
\begin{equation} \label{e:main_prob_space}
\Omega := \oplus_{l=1}^{\infty} E_l
\end{equation}
which can be seen as a probability space of random sequences of spherical harmonics.  

\subsection{Tangent Nodal Sets}
The inspiration for this note is the interaction between tangent/normal spaces to nodal sets of random eigenfunctions and various geometric quantities of these submanifolds, a notion which was also considered by Gayet-Welschinger who give upper and lower bounds on the expected Betti numbers (a natural step to understand the singularities of a vector field) for elliptic pseudodifferential operators \cite{GW14}, Dang-Riv\`ere (who themselves were motivated by the work \cite{GW14}) who give asymptotics pertaining to the equidistribution of normal cocycles for Laplace eigenfunctions on general compact manifolds \cite{DR17}, and Rudnick-Wigman who consider fixed normal directions to nodal sets on the flat torus \cite{RW18}.  Note that on the flat torus, it is possible to consider fixed non-vanishing directions $V_0$ unlike on general manifolds (consider the hairy ball theorem on the sphere).   

We seek in some sense a more singular cocycle to that of Dang-Rivi\`ere in that we would like to understand the current which places weight on the intersection of the normal bundle and that of the natural section generated by a general $V$ along the nodal set.  This work is a step towards this but in a very specific setting.

Let $\mathcal{V}(S^2)$ be the set of smooth vector fields on $S^2$ and consider a fixed $V \in \mathcal{V}(S^2)$.  Given an eigenfunction $f_l \in C^{\infty}(S^2)$, we can consider the action of the vector field $V$ on $f_l$ through the following formula,
\begin{equation*}
Vf_l(x) = \langle \nabla f, V \rangle_{g(x)},
\end{equation*}
where $\langle , \rangle_{g(x)}$ is the inner product on $T_xS^2$ and $\nabla f_l$ is the gradient of $f$ with respect to the round metric $g$.  Hence $\langle \nabla f_l, V \rangle_{g(x)} = 0$ is equivalent to $V(x) \in T_x \{f_l = 0 \}$.  That is, $V(f_l) = 0$ implies that $V \in T_x \{ f_l=0 \}$.

For the set of regular points in the nodal set $Z(f_l)$ (which is $\nu$-almost surely true thanks to Bulinskaya's Lemma; see the standard text \cite{AW09}), we would like to understand the statistics of when $V \in T_x(Z(f_l))$ or more succinctly the so-called \textit{V-tangent nodal set} $Z_V(f_l) = \{ x\in S^2 : f_l(x)=0, Vf_l(x) = 0 \}$ for a random eigenfunction $f_l$.  We will later show that $Z_V(f_l)$ is almost surely finite as well in Section \ref{subsec:prep_prob}.  In particular, we would like to understand the large-$l$ behavior of the expected value of $Z_V(f_l)$.

\subsection{Main Result}

The main result of this note is the following:

\begin{theo} \label{t:main}
Let $V \in \mathcal{V}(S^2)$ be fixed and have finitely many zeroes.  Let $m$ be the maximal order of vanishing amongst all the zeroes.

We have the following asymptotic for the expected value:
\begin{equation*}
\mathbb{E}_l[ \# Z_V(f_l) ] =  \frac{\sqrt{2}}{4\pi^2} \, l^2 + \mathcal{O}_{V}(l^{1 + \frac{3m}{3m + 2}}),
\end{equation*}
which holds for all $l \geq l_0(V)$.  Furthermore, the remainder term has a bounded dependence on $V$ and its derivatives.

In the case that 0 is a regular value of $V$ and of order $m$, $\{V=0\}$ is a smooth curve of finite length and we get a similar asymptotic but with a remainder term of $\mathcal{O}_{V}(l^{1 + \frac{3m}{3m + 1}})$
\end{theo}

\begin{rem}
The slightly larger than normal error term is due to weak singularities of the first intensity which arise from the zeroes of $V$, which are unavoidable due to topological reasons.  See Section \ref{subsec:proof_main} and proceeding comments for a detailed explanation.
\end{rem}

\begin{rem}
This form of independence contrasts that of Rudnick-Wigman \cite{RW18} where they consider the number of points in $Z(f_l)$ with a fixed normal direction $\zeta$ on the flat torus of dimension $d$; Rudnick-Wigman obtain an exact expected value where the angle $\zeta$ appears explictly as well as an upperbound for sequences of certain deterministic eigenfunctions.
\end{rem}

\subsection{Current work in progress}
This calculation stems from current work in progress on nodal domains where we have the following:

\begin{goal} \label{g:NS}
Find an explicit relationship, or some kind of substantial lack of relation, between $\mathbb{E}_l[ \# Z_V(f_l) ] $ and $\mathbb{E}_l [ \mbox{number of connected components of } Z(f_l) ]$.  More specifically, we would like to relate the constant $\frac{\sqrt{2}}{4 \pi^2}$ to $C_{NS}$ where the latter is the Nazarov-Sodin constant.
\end{goal}

The works of Nazarov-Sodin \cite{NS1, NS2} determine an asymptotic law on the counting function for the number of connected components of the nodal set of a random spherical harmonic $f_l$.  Let us state this specific result for reference purposes:

\begin{theo} \cite{NS1}
There exists a positive constant $C_{NS}$ (depending only on the dimension of $S^2$ and not its geometry) such that with probability tending to 1 as $l \rightarrow \infty$, 
\begin{equation*}
\mbox{the number of connected components of } Z(f_l) = \beta \frac{\omega_2}{(2\pi)^2} l^2 + o(l^2),
\end{equation*}
where $\omega_2$ is the volume of the unit ball in 2 dimensions. 
\end{theo}  

In the process of establishing this result (with the latter work actually addressing the more general hypotheses of Gaussian processes), the authors provide many fundamental ideas and powerful techniques which have allowed for the study of the topology of the connected nodal components of functions coming from Gaussian ensembles; see the papers of Canzani-Sarnak \cite{CS18} and Sarnak-Wigman \cite{SW18} as well as the series of works by Gayet-Welschinger (see \cite{GW14} and the references therein).

\textbf{Acknowledgements}: S.E. would like to thank I. Wigman for numerous helpful discussions and Z. Rudnick for suggesting the collaboration with Wigman which eventually led to this note and upcoming work. 

\section{Geometric Preliminaries}

\subsection{Coordinates $\&$ bases} Consider $S^2 \subset \R^3$ and take $x \in S^2$.  Throughout this note, we consider spherical coordinates at $x$ given by
\begin{equation*}
(\sin \phi_x \cos \theta_x, \sin \phi_x \sin \theta_x, \cos \phi_x)
\end{equation*} 
where $\theta_x \in [0, 2 \pi)$ and $\phi_x \in (0, \pi)$.  Using this system of coordinates, our metric on the set $[0, 2 \pi) \times (0, \pi)$ becomes
\begin{equation*}
g(\theta_x, \phi_x) = \begin{pmatrix}
\sin^2 \phi_x & 0 \\
0 & 1
\end{pmatrix}.
\end{equation*}
Throughout our computation, we will frequently use the orthogonal (instead of an orthonormal) basis $\{\frac{\partial}{\partial \theta_x},  \frac{\partial}{\partial \phi_x} \}$ due to the coordinate singularity at $\phi_x = 0$.  

\begin{rem}
Although the subscripts of $x$, which are meant to signify our coordinate representation is attached to the $x$-variable, may seem tedious notation-wise, it will become useful when calculating entries of the covariance matrix as we must consider various derivatives in $x$ and $y$ of the spectral projector $P_l(d(x,y))$ before setting $x=y$.
\end{rem}

\subsection{Vector field $V$}
We now let $V$ be a smooth vector field on $S^2$.  We note that in our chosen spherical coordinates, $V = v_1(\theta_x, \phi_x) \frac{\partial}{\partial \theta_x} + v_2(\theta_x, \phi_x) \frac{\partial}{\partial \phi_x}$ and define $V^{\perp} :=  v_2(\theta_x, \phi_x) \frac{\partial}{\partial \theta_x} + (- \sin^2{\phi_x}) v_1(\theta, \phi_x) \frac{\partial}{\partial \phi_x}$.  We choose $V^{\perp}$ in this particular way so that the ordered set $\{V, V^{\perp} \}$ has positive orientation and is orthogonal.  In two dimensions, we have the following consequence: given $V$, any other choice of $\tilde{V^{\perp}}$ for which $\{V, \tilde{V^{\perp}} \}$ is orthogonal and positively orientated is just a (variable) rescaling of $V^{\perp}$ at each point of $S^2$.

In regards to the action of $V$ on a smooth function $f$ in local coordinates denoted by $x$, after slightly abused notation, we have that
\begin{equation*}
Vf_l(x) = \langle \nabla f_l, V \rangle_{g(x)} = \nabla f_l(x)^T \cdot g \, V =: d(f_l)_x(V) = \nabla_{\theta_x, \phi_x} f_l \cdot V
\end{equation*}
where $\nabla_{\theta_x, \phi_x} f_l$ is the Euclidean gradient in the coordinates $\theta_x, \phi_x$ and have expressed the metric inner product through the multiplication of matrices; notice that we have used the geometric definition of the gradient in order to relate the metric gradient to the Euclidean gradient.  Hence we have the local expression $\nabla f_l(x) = g^{-1} \nabla_{\theta_x, \phi_x} f_l(x)$. 

We conclude with the quick observation that $\|V^{\perp} \|_g(x) = \|V \|_g(x) \sin \phi_x = \|V \|_g(x) \sqrt{\det g(x)}$, a fact which we will use in our calculations in Section \ref{subsec:calc_cond_cov}.

\section{Calculating the Expectation}

\subsection{Preparing the probability space} \label{subsec:prep_prob}

We must first verify the conditions of Theorem 6.2 in Aza\"s-Wchebor \cite{AW09}, particularly points (iii) and (iv).  The following lemma succinctly addresses these two conditions:

\begin{lem}
For the centered Gaussian field $F_l = (f_l, Vf_l)$, we have the distribution of $F_l$ is non-degenerate and that 
\begin{eqnarray}
\nonumber && \mathbb{P}_l[ NonSing_{V,l}^{\complement} := \{ \omega \in E_l: \exists x \in S^2 - \{ V=0 \} \mbox{ such that } f_l(x)=0, Vf_l(x)=0, \mbox{ and } \\
&& (DF_l)_x (x)\mbox{ is not invertible}\} ] = 0.
\end{eqnarray}   
\end{lem}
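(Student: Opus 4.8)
The plan is to handle the two assertions --- non-degeneracy of the law of $F_l$ (point (iii) of \cite[Thm.~6.2]{AW09}) and the vanishing probability (point (iv)) --- separately, both resting on covariance computations for the Gaussian jet of $f_l$ read off from the reproducing kernel $r(x,y)=\mathbb{E}_l[f_l(x)f_l(y)]=P_l(\cos d(x,y))$, normalized so that $r(x,x)=1$.

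First I would establish non-degeneracy pointwise. Expanding $\cos d(x,y)=1-\tfrac12 d(x,y)^2+\cdots$ near the diagonal and using that $d(x,y)^2$ has vanishing $y$-gradient at $y=x$, the isotropy of $f_l$ yields the two standard identities $\mathbb{E}_l[f_l\,\partial_i f_l](x)=0$ and $\mathbb{E}_l[\partial_i f_l\,\partial_j f_l](x)=c\,\lambda_l^2\,g_{ij}(x)$ for a fixed $c>0$. Hence $f_l(x)$ and $\nabla f_l(x)$ are independent, and since $Vf_l(x)=\langle\nabla f_l,V\rangle_{g(x)}$ is a nonzero linear functional of the gradient exactly when $V(x)\neq0$, the covariance of $(f_l(x),Vf_l(x))$ is the diagonal matrix $\diag\!\big(1,\;c\,\lambda_l^2\,\|V\|_g^2(x)\big)$. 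This is non-singular on all of $S^2\setminus\{V=0\}$, the domain relevant to the lemma, giving (iii).

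For the probability-zero statement I would reduce the bad event to the vanishing of an auxiliary $\R^3$-valued field and then rule that out by a codimension count. Define the smooth field $\Xi(x)=\big(f_l(x),\,Vf_l(x),\,\det(DF_l)_x\big)$ on the $2$-manifold $S^2\setminus\{V=0\}$, the determinant taken in the coordinate frame $\{\partial_{\theta_x},\partial_{\phi_x}\}$ (a rotated chart covers the poles; vanishing of the determinant is frame-independent). At any point as in the statement all three components vanish, so $NonSing_{V,l}^{\complement}\subseteq\{\Xi\text{ vanishes somewhere on }S^2\setminus\{V=0\}\}$. Since the target dimension $3$ strictly exceeds the parameter dimension $2$, Bulinskaya's lemma forces this event to have probability $0$, provided $\Xi(x)$ admits a locally integrable density near the origin, uniformly for $x$ in compacts. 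The engine for this is non-degeneracy of the $2$-jet of $f_l$ --- the joint law of $\big(f_l,\partial_\theta f_l,\partial_\phi f_l,\partial_\theta Vf_l,\partial_\phi Vf_l\big)(x)$, subject only to the relation $\Delta_g f_l=-\lambda_l^2 f_l$ --- which follows from the odd/even (parity) block structure of an isotropic field together with further derivatives of $P_l$; conditioning on $(f_l,Vf_l)(x)=0$ then makes the entries of $(DF_l)_x$ genuine non-degenerate Gaussians.

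The main obstacle is that $\det(DF_l)_x$ is an indefinite quadratic form (of $ad-bc$ type) in those Gaussians, so its conditional density is not bounded at $0$ but carries a logarithmic singularity. The resolution is that this singularity is still locally integrable, giving $\sup_x\mathbb{P}_l[\,|\Xi(x)|\le\ep\,]\le C\,\ep^{3}\log(1/\ep)$; covering a compact subset of $S^2\setminus\{V=0\}$ by $O(\rho^{-2})$ balls of radius $\rho$, each contributing $O(\rho^{3}\log(1/\rho))$ after conditioning the (Gaussian) Lipschitz constant of $\Xi$ to be bounded and letting the bound tend to infinity, the total probability is $O(\rho\log(1/\rho))\to0$. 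The strict codimension surplus $3>2$ thus absorbs the singularity and yields (iv).
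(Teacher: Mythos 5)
Your proposal is correct in substance, and while the non-degeneracy half coincides with the paper's (both compute the pointwise covariance of $(f_l,Vf_l)$, which is $\diag\bigl(1,\tfrac{l^2+l}{2}\|V\|_{g}^2\bigr)$ and hence invertible exactly off $\{V=0\}$), the probability-zero half takes a genuinely different route. The paper never introduces the three-component field $\Xi=(f_l,Vf_l,\det DF_l)$: it works on the product space $S^2\times\R^{2l+1}$, shows that the evaluation map $\tilde G=(f_l,Vf_l,\nabla f_l)$ has full rank in the coefficient directions away from $\{V=0\}$, concludes via the implicit function theorem that $\tilde G^{-1}(\overrightarrow{0})$ is a submanifold of dimension at most $2l-1$, and finishes because the projection of such a manifold into $\R^{2l+1}$ has Lebesgue, hence Gaussian, measure zero. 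What that buys is a complete avoidance of density estimates: no small-ball bounds and no singular densities ever appear, only differential topology plus the finite dimensionality of $E_l$. What your Bulinskaya-type argument buys is that it attacks the degeneracy event itself: the paper's codimension count controls the simultaneous vanishing of $f_l$, $Vf_l$ and $\nabla f_l$ (singular nodal points), whereas invertibility of $(DF_l)_x$ can also fail at points where $\nabla f_l\neq 0$ but the two rows of $DF_l$ are parallel (degenerate tangencies); your field $\Xi$ vanishes on the union of both configurations, so your route verifies condition (iv) of \cite[Theorem 6.2]{AW09} in its entirety, and since it never uses the finite dimensionality of the ensemble it transfers verbatim to other Gaussian models.

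One imprecision in your write-up should be repaired, although it does not break the proof. Conditionally on $(f_l,Vf_l)(x)=(0,0)$, the four entries of $(DF_l)_x$ are \emph{not} a non-degenerate Gaussian vector: since $Vf_l=\langle\nabla f_l,V\rangle_{g}$, the conditioning is itself a linear constraint on the first row, pinning $\nabla f_l(x)$ to the line $\R\, V^{\perp}(x)$. The correct statement is that, under the conditioning, the determinant collapses (up to the factor $\|V\|_g^{-2}$) to the product $(V^{\perp}f_l)(VVf_l)$ of two jointly Gaussian scalars --- exactly the paper's reduction in equation~(\ref{e:orthog_det}) --- and such a product has a density with at worst a logarithmic singularity at $0$, degrading to an inverse-square-root singularity if the two factors are perfectly correlated; both are locally integrable. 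Your small-ball bound then becomes at worst $C\ep^{5/2}$ instead of $C\ep^{3}\log(1/\ep)$, and the covering by $O(\rho^{-2})$ balls still yields a total probability $O(\rho^{1/2}\log(1/\rho))\to 0$, so your conclusion stands with that sentence corrected.
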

\begin{proof}
As calculated below, we find that the determinant of the covariance matrix for $F_l$ equals $\sqrt{2}\pi \|V\|_g \sqrt{l^2 +1}$.  The verification of the measure 0 property is slightly more subtle.  

It suffices to show that the differential over $S^2 \times \R^{2l+1}$ of $G=(f_l, Vf_l, \nabla f_l, \nabla Vf_l)$ has rank 4, for then the implicit function theorem would imply that $G^{-1}(\overrightarrow{0})$ is smooth and of codimension $4$ in $S^2 \times \R^{2l+1}$.  By dimension considerations, this implies that $\pi_2$, the projection onto $\R^{2l+1}$, applied to $G^{-1}(\overrightarrow{0})$ has dimension $\leq 2l-1$ and therefore the set $\{ \omega \in E_l: \exists x \in S^2 \mbox{ such that } f_l(x)=0, Vf_l(x)=0, \mbox{ and } (DF_l)_x (x)\mbox{ is degenerate} \}$ has Gaussian measure 0. 

Notice however that if we compute the differential over $\R^{2l+1}$ of the slightly modified Gaussian field $\tilde{G}=(f_l, Vf_l, \nabla f_l)$, which in coordinates gives a $4 \times (2l+1)$ sub-matrix of that of the full differential $DG$, and show this has rank 4  for $l$ large enough, then this would show our originally defined singular set has dimension bounded above by $2l-1$.  

Let $[D\tilde{G}]$ be the coordinate representation of our desired differential.  Then if $[D\tilde{G}][D\tilde{G}]^t$ is invertible on $\R^4$ then $[D\tilde{G}]$ has rank 4.  An explicit calculation using standard spherical coordinates gives us that
\begin{equation*}
[D\tilde{G}][D\tilde{G}]^t = 
\begin{pmatrix}
1 & 0 & 0 & 0 \\
0& \|V\|^2\frac{l^2+l}{2} & v_1 \sin^2 \phi_x\frac{l^2+l}{2} & v_2\frac{l^2+l}{2}\\
0 &   v_1 \sin^2 \phi_x\frac{l^2+l}{2}  & \frac{l^2+l}{2} & 0 \\
0 & v_2  \frac{l^2+l}{2} & 0 & \frac{l^2 + l}{2}.
\end{pmatrix}
\end{equation*}
The determinant of this matrix equals $\frac{(l^2 + l)^3}2 \left( \frac{\|V\|^2}{4} - \frac{v_1^4}{4}\sin \phi_x^4 - \frac{v_2^2}{4} \right)$ which can only be zero if $\|V\|^2 = v_1^2 \sin_x^4 \phi + v_2^2$.  Considering the round metric on $S^2$, this is possible only if $V=0$ at $x$ as $\sin_x^2 \phi \leq 1$.

\end{proof}

Hence, we can now identify $\Omega$ with $\oplus_l NonSing_{V,l}$. It is important to notice that we have not yet made any assumptions on the structure of the vanishing set $\{V(x) = 0\}$.  This will only play a role in the penultimate step of the proof of Theorem \ref{t:main} in Section \ref{subsec:proof_main}.

\subsection{Preparing the orthogonal determinant}
We will employ the Kac-Rice formula (see \cite{AW09} Chapter 6) and compute the quantity
\begin{equation*}
\int_{S^2} \Phi_l(0,0) \, \mathbb{E}\left[ | \det^{\perp} DF_l(x) | \big| f_l(x) = 0, V f_l(x) = 0 \right] \, dV_g(x)
\end{equation*} 
where $F_l = (f_l, Vf_l)$ and $\Phi_l$ is the Gaussian probability density function of $F_l$.  The quantity $\det^{\perp} DF_l$ is the orthogonal determinant, which is defined as the determinant of the map $DF_l^* DF_l$.  

It follows that evaluating this determinant at the orthogonal basis $\{V, V^{\perp}\}$ (i.e. expressing our coordinate basis vector fields with respect to the proposed orthogonal basis), at least away from the zeroes of $V$, with respect to spherical coordinates after using formulas for Legendre polynomials gives us
\begin{eqnarray}
&& \det^{\perp} DF_l = \left( \frac{a^{\theta_x}_{V}}{\|V\|^2_g}Vf_l + \frac{a^{\theta_x}_{V^{\perp}}}{\|V^{\perp}\|^2_g}V^{\perp}f_l \right) \left( \frac{a_{V}^{\phi}}{\|V\|^2_g} V Vf_l + \frac{a_{V^{\perp}}^{\phi}}{\|V^{\perp}\|^2_g} V^{\perp}Vf_l \right) \\
&& - \left( \frac{a^{\phi}_{V}}{\|V\|^2_g}Vf_l + \frac{a^{\phi}_{V^{\perp}}}{\|V^{\perp}\|^2_g}V^{\perp}f_l \right) \left(\frac{ a_{V}^{\theta_x}}{\|V\|^2_g} V Vf_l + \frac{a_{V^{\perp}}^{\theta_x}}{\|V^{\perp}\|^2_g} V^{\perp}Vf_l \right).
\end{eqnarray}  
Here, $a_{V}^{\theta_x} : = \langle \frac{\partial}{\partial \theta_x}, V \rangle_g = (\sin^2 \phi) v_1$, $a_{V}^{\phi} : = \langle \frac{\partial}{\partial \phi}, V \rangle_g = v_2$, $a_{V^{\perp}}^{\theta_x} : = \langle \frac{\partial}{\partial \theta_x}, V^{\perp} \rangle_g = (\sin^2 \phi) v_2$,  and $a_{V^{\perp}}^{\phi} : = \langle \frac{\partial}{\partial \phi}, V^{\perp} \rangle_g = (-\sin^2 \phi) v_1$; these coefficients follow immediately from our expression of $V$ and $V^{\perp}$ with respect to our coordinates.

Using the conditioning, the absolute value of the orthogonal determinant reduces to 
\begin{eqnarray} \label{e:orthog_det}
\nonumber && \frac{1}{\|V\|_g^2 \|V^{\perp} \|_g^2} \left| \left( a_{V^{\perp}}^{\theta_x}a_{V}^{\phi} - a_{V^{\perp}}^{\phi} a_{V}^{\theta_x} \right) (V^{\perp}f_l)(VV f_l) \right| \\
\nonumber && =  \frac{1}{\|V\|_g^2 \|V^{\perp} \|_g^2} (\sin^2 \phi) ( v_2^2 + \sin^2 \phi v_1^2) \left|  (V^{\perp}f_l)(VV f_l) \right| \\
\nonumber && =  \frac{1}{\|V\|_g^2 \|V^{\perp} \|_g^2} \cdot \| V \|_g^2 \det(g) \cdot \left| (V^{\perp}f_l)(VV f_l) \right| \\
\nonumber && = \frac{1}{ \|V^{\perp} \|_g^2}\det(g) \cdot \left| (V^{\perp}f_l)(VV f_l) \right| \\
&& = \frac{1}{ \|V \|_g^2} \cdot \left| (V^{\perp}f_l)(VV f_l) \right| 
\end{eqnarray}
where $g$ is the round metric in coordinates. Hence, we take the Gaussian field $(f_l, Vf_l, V^{\perp}f_l, VVf_l)$ and compute the corresponding conditional covariance matrix.  That is, we compute the covariance for the field $X_2 := (V^{\perp}f_l, VVf_l)$ conditioned on the event that $X_1 := (f_l, Vf_l) = 0.$ 

\subsection{Entries of the Covariance Matrix}
We list the coefficients of the full covariance matrix for the field $(f_l, Vf_l, V^{\perp}f_l, VVf_l)$.  Note that due to symmetry after restricting to the diagonal, i.e. setting $\theta_x = \theta_y$ and $\phi_x = \phi_y$, we only need to compute the entries on the diagonal and above.  These calculations are done in full detail in Appendix - Section \ref{app_s:cov}.

For the convenience of the reader, we now list the final form of the entries:

\pagebreak

\begin{enumerate} \label{r: entries_list}
\item $a_{11} = P_l(h(x,y))_{|x=y} = P_l(1) $
\item $a_{12} = V_y P_l(h(x,y))_{|x=y}  = 0 $
\item $a_{13} = V_y^{\perp}P_l(h(x,y))_{|x=y}  = 0 $
\item $a_{14} = V_y V_y P_l(h(x,y))_{|x=y}  = -\| V \|_{g(x)}^2 P'_l(1)$
\item $a_{22} = V_x V_y P_l(h(x,y))_{|x=y}  = \| V \|_{g(x)}^2 P'_l(1)$
\item $a_{23} = V_x V_y^{\perp} P_l(h(x,y))_{|x=y}  = 0$
\item  $a_{24} = V_x V_y V_y P_l(h(x,y)_{|x=y}  = $ \begin{eqnarray*}
 &&  \Big( (v_1^x)^2 \frac{\partial v_1^x}{\partial \theta_x} \sin^2 \phi_x + v_1^x \frac{\partial v_1^x}{\partial \phi_x}v_2^x \sin^2 \phi_x + (v_1^x)^2 v_2^x \sin \phi_x \cos \phi_x \\
 &&  + v_1^x v_2^x \frac{\partial v_2^x}{\partial \theta_x} + (v_2^x)^2 \frac{\partial v_2^x}{\partial \phi_x}\Big)P_l'(1) =: \tilde{a}_{24}(\theta_x, \phi_x) P_l'(1)
\end{eqnarray*}
\item $a_{33} = V_x^{\perp} V_y^{\perp} P_l(h(x,y))_{|x=y}  = \| V^{\perp} \|_{g(x)}^2 P'_l(1)$
\item $a_{34} = V^{\perp}_x V_y V_y P_l(h(x,y))_{|x=y}  =$
\begin{eqnarray*} 
&&  \Big( v_2^x v_1^x \frac{\partial v_1^x}{\partial \theta_x} \sin^2 \phi_x + (v_2^x)^2 v_1^x \sin \phi_x \cos \phi_x + (v_2^x)^2\frac{\partial v_1^x}{\partial \phi_x} \sin^2 \phi_x \\
 && + (v_2^x)^2 v_1^x \sin \phi_x \cos \phi_x + (v_1^x)^3 \cos \phi_x \sin^3 \phi_x - (v_1^x)^2 \frac{\partial v_2^x}{\partial \theta_x} \sin^2 \phi_x - v_1^x v_2^x \frac{\partial v_2^x}{\partial \phi_x} \sin^2 \phi_x   \Big)P_l'(1) \\
 && =: \tilde{a}_{34}(\theta_x, \phi_x) P_l'(1)
\end{eqnarray*}
\item $a_{44} = V_x V_x V_y V_y P_l(h(x,y))_{|x=y}  = \frac{3}{8} \|V\|^4_{g(x)}l^4 + \frac{6}{8} \|V\|^4_{g(x)} l^3 + \frac{1}{2}\tilde{a}_{44}^1(\theta_x, \phi_x) l^2 + (\frac{1}{2}\tilde{a}_{44}^1(\theta_x, \phi_x) - \frac{3}{8} \|V \|^4)l $ where 
\begin{eqnarray*}
&& \tilde{a}_{44}^1(\theta_x, \phi_x) =  v_1^x \frac{\partial v_1^x}{\partial \theta_x } v_1^x \frac{\partial v_1^x}{\partial \theta_x} \sin^2 \phi_x  \\
&& + (v_1^x)^4 \sin^2 \phi_x  - (v_1^x)^3 \frac{\partial v_2^x}{\partial \theta_x} \sin \phi_x \cos \phi_x  + (v_1^x)^2 \frac{\partial v_1^x}{\partial \theta_x} v_2^x \sin \phi_x \cos \phi_x  \\
&& + v_1^x \frac{\partial v_1^x}{\partial \theta_x} \frac{\partial v_1^x}{\partial \phi_x} v_2^x \sin^2 \phi_x  + (v_1^x)^2 \frac{\partial v_1^x}{\partial \theta_x} v_2^x \sin \phi_x \cos \phi_x  - (v_1^x)^2 v_2^x \frac{\partial v_2^x}{\partial \phi_x} \sin \phi_x \cos \phi_x   \\
&&  + (v_1^x)^2 (v_2^x)^2 \sin^2 \phi_x  + (v_1^x)^2 \frac{\partial v_1^x}{\partial \theta_x} v_2^x \cos \phi_x \sin \phi_x  \\
&& - (v_1^x)^3 \frac{\partial v_2^x}{\partial \theta_x} \cos \phi_x \sin \phi_x  + (v_1^x)^2 \left(\frac{\partial v_2^x}{\partial \theta_x}\right)^2   \\
&& + (v_1^x)^2(v_2^x)^2 \cos^2 \phi_x  + v_1^x \frac{\partial v_1^x}{\partial \phi_x}(v_2^x)^2 \cos \phi_x \sin \phi_x    \\
&& + (v_1^x)^2 (v_2^x)^2 \cos^2 \phi_x   + v_1^x \frac{\partial v_2^x}{\partial \theta_x} v_2^x \frac{\partial v_2^x}{\partial \phi_x} \\
&& + v_2^x \frac{\partial v_1^x}{\partial \phi_x} v_1^x \frac{\partial v_1^x}{\partial \theta_x} \sin^2 \phi_x  + v_2^x (v_1^x)^2 \frac{\partial v_1^x}{\partial \theta_x} \cos \phi_x \sin \phi_x  \\
&& + (v_1^x)^2 (v_2^x)^2 \cos^2 \phi_x  + v_1^x \frac{\partial v_1^x}{\partial \phi_x} (v_2^x)^2 \sin \phi_x \cos \phi_x  + v_1^x \frac{\partial v_1^x}{\partial \phi_x} (v_2^x)^2 \cos \phi_x \sin \phi_x  \\
&& + \left( \frac{\partial v_1^x}{\partial \phi_x} \right)^2 (v_2^x)^2 \sin^2 \phi_x  + v_1^x \frac{\partial v_1^x}{\partial \phi_x}(v_2^x)^2 \sin \phi_x \cos \phi_x  \\
&& + (v_1^x)^2 (v_2^x)^2 \cos^2 \phi_x  - (v_1^x)^2 v_2^x \frac{\partial v_2^x}{\partial \phi_x} \cos \phi_x \sin \phi_x  \\
&& + (v_1^x)^2 (v_2^x)^2 \sin^2 \phi_x   + v_1^x v_2^x \frac{\partial v_2^x}{\partial \phi_x} \frac{\partial v_2^x}{\partial \theta_x}  + \left( \frac{\partial v_2^x}{\partial \phi_x} \right)^2(v_2^x)^2   \\
&& + (v_2^x)^4.
\end{eqnarray*}
\end{enumerate}

\subsection{Conditional covariance matrix} \label{subsec:calc_cond_cov}

In this section, we would like to compute the covariance matrix for $(V^{\perp}f_l, VVf_l)$ conditioned on the random vector $(f_l, Vf_l)$, particularly at the value $(0,0)$.  Given that we started with the Gaussian vector field $(f_l, Vf_l, V^{\perp}f_l, VVf_l)$, we can conveniently apply formulas found in \cite{AT00} Section 1.2 for our desired matrix.

The $4 \times 4$-matrix total covariance matrix whose entries we computed in the previous section allows us to explicit calculate
\begin{eqnarray*}
\underbrace{\begin{pmatrix}
a_{33} & a_{34} \\
a_{43} & a_{44} 
\end{pmatrix}}_{=:M_1}
- \underbrace{\begin{pmatrix}
a_{31} & a_{32} \\
a_{41} & a_{42} 
\end{pmatrix}}_{=:M_2} \cdot
\underbrace{\begin{pmatrix}
a_{11} & a_{12} \\
a_{21} & a_{22} 
\end{pmatrix}^{-1}}_{=:M_3} \cdot
\underbrace{\begin{pmatrix}
a_{13} & a_{14} \\
a_{23} & a_{24} 
\end{pmatrix}}_{=:M_4}.
\end{eqnarray*}
	We note that the probability mass function $\Phi_F(0,0)$ of the random field $F=(f_l, Vf_l)$ is $\frac{1}{(2 \pi)\sqrt{\det C_{11}}}$ where $\det C_{11} = \frac{\| V \|^2}{2} (l^2 + l)$.  For the sake of clarity, let us write out these individual matrices: 
\begin{eqnarray*}
&&M_1 =  \begin{pmatrix}
\| V^{\perp} \|^2_{g(x)} \frac{l^2+l}{2} & \tilde{a}_{34}(\theta_x, \phi_x)\frac{l^2+l}{2} \\
\tilde{a}_{34}(\theta_x,\phi_x) \frac{l^2+l}{2} & \left[ \frac{3}{8} \|V\|^4_{g(x)}l^4 + \frac{6}{8} \|V\|^4_{g(x)} l^3 + \frac{1}{2}\tilde{a}_{44}^1(\theta_x, \phi_x) l^2 + (\frac{1}{2}\tilde{a}_{44}^1(\theta_x, \phi_x) - \frac{3}{8} \|V \|^4)l  \right]
\end{pmatrix} \\
&& M_2 =  \begin{pmatrix}
0 & 0 \\
- \| V \|^2_{g(x)}\frac{l^2+l}{2} & \tilde{a}_{24}(\theta_x,\phi_x)\frac{l^2+l}{2}
\end{pmatrix} \\
&& M_3 = \begin{pmatrix}
1 & 0 \\
0 & \frac{2}{\| V \|^2_g (l^2+l)}
\end{pmatrix} \\
&& M_4 = \begin{pmatrix}
0 & - \| V \|^2_{g(x)} \frac{l^2+l}{2} \\
0 & \tilde{a}_{24}(\theta_x,\phi_x)\frac{l^2+l}{2}
\end{pmatrix}
\end{eqnarray*}
where $\lambda_l^2 = l^2+l$ is our Laplace eigenvalue.  This leads to 
\begin{eqnarray*}
&& M_2 \cdot M_3 \cdot M_4 = \\
&& \begin{pmatrix}
0 & 0 \\
0 & \| V \|^4_{g(x)} (\frac{l^2 + l}{2})^2 + \frac{(\tilde{a}_{24}(\theta_x,\phi_x))^2}{ \| V \|_{g(x)}^2} (\frac{l^2 + l}{2})
\end{pmatrix}.
\end{eqnarray*}
Finally, we obtain the symmetric $4 \times 4$ matrix $M_1 - M_2 M_3 M_4$, whose entries $m_{i,j}$ are the following
\begin{eqnarray*}
&& m_{1,1} = \| V^{\perp} \|^2_{g(x)} \frac{l^2+l}{2} =   \frac{\| V^{\perp} \|^2_{g(x)}}{2} \, l^2 + \mathcal{O}(l) \\
&& m_{1,2} = \tilde{a}_{34}(\theta_x,\phi_x)\frac{l^2+l}{2} = \frac{\tilde{a}_{34}(\theta_x,\phi_x)}{2} \, l^2 + \mathcal{O}(l)\\
&& m_{2,2} =  \left( \frac{3}{8} \|V\|^4_{g(x)} -  \frac{1}{4} \| V \|^4_{g(x)} \right)l^4 + \left( \frac{6}{8} \|V\|^4_{g(x)} - \frac{1}{2}\| V \|^4_{g(x)} \right) l^3 \\
&& + \left( \frac{1}{2}\tilde{a}_{44}^1(\theta_x, \phi_x) - \frac{1}{4} \| V \|^4_{g(x)} - \frac{(\tilde{a}_{24}(\theta_x,\phi_x))^2}{ 2\| V \|_{g(x)}^2} \right) l^2 \\
&& + \left(\frac{1}{2}\tilde{a}_{44}^1(\theta_x, \phi_x) - \frac{3}{8} \|V \|^4 - \frac{(\tilde{a}_{24}(\theta_x,\phi_x))^2}{ 2\| V \|_{g(x)}^2} \right)l  \\ 
&& = \frac{\|V \|^4}{8}l^4 +  \frac{\|V \|^4}{4}l^3 + \frac{1}{\|V\|^2_{g(x)}} \mathcal{O}(l^2).
\end{eqnarray*}
Note that the implicit constants appearing in our big-O notation are uniformly bounded in $\phi_x, \theta_x$.  Let us refer to our resulting conditional covariance matrix, with these particular entries, as
\begin{equation*}
\mathbf{\Delta}_l(\theta_x,\phi_x) := \begin{pmatrix}
m_{1,1} & m_{1,2} \\
m_{1,2} & m_{2,2}
\end{pmatrix}.
\end{equation*}

\subsection{Evaluating the first intensity}

Notice that $\det \mathbf{\Delta}_l(\theta_x, \phi_x)$ is possibly singular (i.e. blows up) when $(v_2^x)^2  + \sin^2 \phi_x (v_1^x)^2 = \langle V, V \rangle_g = 0$ which is equivalent to having $V = 0$.  Given that every continuous vector field on $S^2$ must have at least one zero, it is natural that we place some restrictions on how $V$ vanishes.  However, we will show that having a singular determinant in this sense is not the case via an explicit calculation.

As $\mbox{tr}\left( \mathbf{\Delta}_{l}(\theta_x, \phi_x) \right)$ and $\det \left( \mathbf{\Delta}_{l}(\theta_x, \phi_x) \right)$ play an important role in the some upcoming calculations, we write out these quantities explicitly for sake of reference: 
\begin{eqnarray*}
&&\mbox{tr}\left( \mathbf{\Delta}_{l}(\theta_x, \phi_x) \right) = \left( \frac{3}{8} \|V\|^4_{g(x)} -  \frac{1}{4} \| V \|^4_{g(x)} \right)l^4 + \left( \frac{6}{8} \|V\|^4_{g(x)} - \frac{1}{2}\| V \|^4_{g(x)} \right) l^3 \\
&& + \left( \frac{1}{2}\tilde{a}_{44}^1(\theta_x, \phi_x) - \frac{1}{4} \| V \|^4_{g(x)} - \frac{(\tilde{a}_{24}(\theta_x,\phi_x))^2}{ 2\| V \|_{g(x)}^2} + \frac{1}{2} \| V^{\perp} \|^2 \right) l^2 \\
&& + \left(\frac{1}{2}\tilde{a}_{44}^1(\theta_x, \phi_x) - \frac{3}{8} \|V \|^4 - \frac{(\tilde{a}_{24}(\theta_x,\phi_x))^2}{ 2\| V \|_{g(x)}^2} + \frac{1}{2} \| V^{\perp} \|^2 \right)l.  \\ 
\end{eqnarray*}

Using the following formula,
\begin{eqnarray*}
&& \det\left( \mathbf{\Delta}_{l}(\theta_x, \phi_x) \right) = \\
&& \left( \frac{\| V^{\perp} \|^2_g(x) \cdot \| V \|^4_g(x)}{16} \right)l^6  \\
&&  + \frac{\| V^{\perp} \|^2_g(x)}{2} \left( \frac{3 \|V\|_g^4(x)}{16} \right)l^5 \\
&& + \left( \frac{\| V^{\perp} \|^2_g(x)}{2} \left( \frac{\tilde{a}^1_{44}(\theta_x, \phi_x)}{2} - \frac{ \left( \tilde{a}_{24}(\theta_x, \phi_x) \right)^2}{2 \| V \|^2_g} \right) - \frac{\left( \tilde{a}_{34}(\theta_x, \phi_x) \right)^2}{4}\right)l^4 \\
&& + \left( \frac{\| V^{\perp} \|^2_g(x)}{2} \left( \frac{\tilde{a}^1_{44}(\theta_x, \phi_x)}{2} - \frac{3 \|V\|_g^4(x)}{8} - \frac{ \left( \tilde{a}_{24}(\theta_x, \phi_x) \right)^2}{ 2\| V \|^2_g} \right) - \frac{\left( \tilde{a}_{34}(\theta_x, \phi_x) \right)^2}{2}\right)l^3 \\
&& + \left( \frac{\| V^{\perp} \|^2_g(x)}{2} \left( \frac{\tilde{a}^1_{44}(\theta_x, \phi_x)}{2} - \frac{3 \|V\|_g^4(x)}{8} + \frac{ \left( \tilde{a}_{24}(\theta_x, \phi_x) \right)^2}{2 \| V \|^2_g} \right) - \frac{\left( \tilde{a}_{34}(\theta_x, \phi_x) \right)^2}{4}\right)l^2,
\end{eqnarray*}
we get that $\det\left( \mathbf{\Delta}_{l}(\theta_x, \phi_x) \right) = \frac{\|V\|^4 \| V^{\perp} \|^2}{16}l^6 + \mathcal{O}(l^5)$ and the remainder term is uniformly bounded in $\theta_x, \phi_x$ thanks to $\|V^{\perp} \|_{g(x)} = \| V \|_{g(x)} \sqrt{\det g(x)}$ cancelling out the length factor $\| V \|_{g(x)}$ in the denominators.  We record this observation in the following:

\begin{lem} \label{l:det_condcov}
The conditional covariance has the determinant 
\begin{equation*}
\det \left(\mathbf{\Delta}_l(\theta_x, \phi_x) \right) = \frac{\|V\|^6 \det(g(x))}{16}l^6 + \mathcal{O}(l^5)
\end{equation*}
with a uniformly bounded remainder. Moreover, 
$V(\theta_x, \phi_x)= 0$ for some $(\theta_x, \phi_x)$ if and only if $\det \left(\mathbf{\Delta}_l(\theta_x, \phi_x) \right) = 0$ for the same $(\theta_x, \phi_x)$, uniformly for all $l \geq l_0(V)$.
\end{lem}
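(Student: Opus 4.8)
The plan is to read the asymptotics off directly from the Schur-complement entries $m_{1,1},m_{1,2},m_{2,2}$ already computed, and then to treat the two directions of the equivalence separately, the converse being the delicate one. Since $\det\mathbf{\Delta}_l=m_{1,1}m_{2,2}-m_{1,2}^2$, the top-order contribution comes only from $m_{1,1}m_{2,2}$, whose leading part is $\frac{\|V^{\perp}\|^2_{g(x)}}{2}\cdot\frac{\|V\|^4_{g(x)}}{8}\,l^6$, while $m_{1,2}^2=(\tilde{a}_{34})^2(\frac{l^2+l}{2})^2=\mathcal{O}(l^4)$ and the sub-leading parts of $m_{1,1}m_{2,2}$ are $\mathcal{O}(l^5)$. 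Using $\|V^{\perp}\|_{g(x)}=\|V\|_{g(x)}\sqrt{\det g(x)}$ from Section 2, the leading coefficient becomes $\frac{\|V\|^6_{g(x)}\det g(x)}{16}$, the claimed main term. For the uniform boundedness of the remainder, note that the only source of a factor $\|V\|^{-2}_{g(x)}$ is $m_{2,2}$ (inherited from $M_2M_3M_4$, i.e. the terms $(\tilde{a}_{24})^2/\|V\|^2_{g(x)}$); in $m_{1,1}m_{2,2}$ each such term is multiplied by $m_{1,1}=\|V^{\perp}\|^2_{g(x)}\frac{l^2+l}{2}=\|V\|^2_{g(x)}\det g(x)\,\frac{l^2+l}{2}$, so the metric identity cancels the singularity and leaves a polynomial in $l$ whose coefficients are polynomials in $v_1,v_2$, their first derivatives, and $\sin\phi_x,\cos\phi_x$. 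These are bounded on $S^2$ by sup-norms of $V$ and $\nabla V$, giving the uniform bound on the $\mathcal{O}(l^5)$ remainder.

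The easy direction of the equivalence ($V=0\Rightarrow\det\mathbf{\Delta}_l=0$) is structural. At a zero of $V$ one has $v_1=v_2=0$, hence $\|V\|_{g(x)}=\|V^{\perp}\|_{g(x)}=0$, and each of $\tilde{a}_{24},\tilde{a}_{34},\tilde{a}^1_{44}$ vanishes because every monomial in their expressions carries a positive power of $v_1$ or $v_2$. After the cancellation just described, every coefficient of the degree-$6$ polynomial $l\mapsto\det\mathbf{\Delta}_l(\theta_x,\phi_x)$ is a sum of such monomials, so all of them vanish and $\det\mathbf{\Delta}_l=0$ there, for \emph{every} $l$.

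The converse ($\det\mathbf{\Delta}_l=0\Rightarrow V=0$, i.e. $V\neq 0\Rightarrow\det\mathbf{\Delta}_l>0$ for $l\geq l_0(V)$) I would route through positivity rather than crude coefficient bounds. Since $\mathbf{\Delta}_l$ is a genuine conditional covariance matrix (via the conditioning formulas of \cite{AT00}) it is positive semidefinite, so $\det\mathbf{\Delta}_l\geq 0$, with equality exactly when $\mathbf{\Delta}_l$ is singular, i.e. when some nontrivial combination $\alpha V^{\perp}f_l+\beta VVf_l$ is almost surely a linear combination of $f_l$ and $Vf_l$. Equivalently, via the Schur identity $\det\mathbf{\Delta}_l=\det C/\det C_{11}$ with $\det C_{11}=\frac{\|V\|^2_{g(x)}(l^2+l)}{2}>0$, it suffices to rule out degeneracy of the full field $(f_l,Vf_l,V^{\perp}f_l,VVf_l)$ where $V\neq 0$; this can be done by a rank computation exactly parallel to the one in the Lemma of Section~\ref{subsec:prep_prob}.

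The \emph{uniformity} of the threshold $l_0$ over all $(\theta_x,\phi_x)$ is the main obstacle. The difficulty is that near the finitely many zeros of $V$ the leading coefficient degenerates like $\|V\|^6_{g(x)}$, whereas the $l^4$-coefficient, which contains $-\frac{(\tilde{a}_{34})^2}{4}$ with $(\tilde{a}_{34})^2=\mathcal{O}(\|V\|^4_{g(x)})$, only degenerates like $\|V\|^4_{g(x)}$; for $x$ near a zero the lower-order terms dominate and the naive ``leading term wins for large $l$'' estimate forces a threshold $l_0\sim\|V\|^{-1}_{g(x)}$ that blows up. I would resolve this by a two-region argument. On the complement of small geodesic balls about the zeros, $\|V\|_{g(x)}\geq c>0$ and the main term dominates uniformly for $l\geq l_0^{(1)}$. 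Inside these balls I would rescale the four components by their natural orders $1,\|V\|_{g(x)}l,\|V^{\perp}\|_{g(x)}l,\|V\|^2_{g(x)}l^2$; because each $\tilde{a}$ is $\mathcal{O}(\|V\|^2_{g(x)})$, the normalized entries (in particular $\tilde{a}_{24}/\|V\|^2_{g(x)}$ and $\tilde{a}_{34}/\|V\|^2_{g(x)}$) stay bounded, and the rescaled covariance converges as $l\to\infty$ to a fixed matrix that remains nondegenerate for every admissible bounded value of these normalized parameters. A compactness argument then produces a uniform $l_0^{(2)}$, and one sets $l_0(V)=\max\{l_0^{(1)},l_0^{(2)}\}$. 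Verifying that this rescaled limiting matrix is nondegenerate for all admissible parameters — equivalently, that the positive-semidefinite $\mathbf{\Delta}_l$ cannot drop rank off $\{V=0\}$ — is the crux, and the step I expect to require the most care.
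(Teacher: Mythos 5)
Your treatment of the asymptotic and of the uniform remainder coincides with the paper's own proof: the paper expands $\det\mathbf{\Delta}_l = m_{1,1}m_{2,2}-m_{1,2}^2$ explicitly as a degree-$6$ polynomial in $l$, reads off the leading coefficient $\frac{\|V\|^4_{g(x)}\|V^{\perp}\|^2_{g(x)}}{16}$, rewrites it with the identity $\|V^{\perp}\|_{g(x)}=\|V\|_{g(x)}\sqrt{\det g(x)}$, and invokes exactly the cancellation you describe (the factor $\|V^{\perp}\|^2_{g(x)}$ in $m_{1,1}$ absorbing the $\|V\|^{-2}_{g(x)}$ denominators coming from $M_2M_3M_4$) to get uniform boundedness of the $\mathcal{O}(l^5)$ term. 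Your ``easy direction'' is likewise what the paper implicitly uses: after that cancellation every coefficient is a polynomial in $v_1,v_2$, their first derivatives and trigonometric factors, each monomial carrying positive powers of $v_1$ or $v_2$.

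Where you depart from the paper is the converse with a uniform threshold $l_0(V)$, and here your proposal has a genuine gap --- though one you share with the paper, which offers no argument at all for this direction: the Lemma is ``recorded as an observation'' immediately after the explicit polynomial, the implicit reasoning being that the leading coefficient is positive wherever $V\neq 0$. Your diagnosis that this naive reasoning only produces a point-dependent threshold $l_0\sim\|V\|^{-1}_{g(x)}$ (since the $l^4$-coefficient degenerates like $\|V\|^4_{g(x)}$ against $\|V\|^6_{g(x)}$ for the leading one) is correct and is a real soft spot in the paper's presentation. Your repair via positive semidefiniteness and the Schur identity $\det\mathbf{\Delta}_l=\det C/\det C_{11}$ is sound as a reduction, but the closing compactness/rescaling step is exactly where the content lies, and you leave its crux (nondegeneracy of the rescaled limit for all admissible parameters) unproven; as written the converse is therefore not established. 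A cleaner way to finish, avoiding rescaling entirely, is to use the homogeneity of the round sphere. The full covariance $C$ is the Gram matrix of the four vectors $K_x,\,VK_x,\,V^{\perp}K_x,\,VVK_x$ in $E_l$, where $K_x=P_l(\langle x,\cdot\rangle)$ is the reproducing kernel. When $V(x)\neq 0$, one has $\operatorname{span}\{VK_x,V^{\perp}K_x\}=\operatorname{span}\{\partial_1 K_x,\partial_2 K_x\}$ in an orthonormal frame $(e_1,e_2)$ with $e_1=V/\|V\|_{g(x)}$, and $VVK_x$ equals $\|V\|^2_{g(x)}\partial_1^2K_x$ plus a first-order term lying in that span; so $\det C>0$ if and only if $\partial_1^2K_x\notin\operatorname{span}\{K_x,\partial_1K_x,\partial_2K_x\}$. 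By rotation invariance of the ensemble this last condition is independent of the point $x$ and of the direction $e_1$: it is a single Gram-determinant condition whose entries are explicit in $P_l(1),P_l'(1),P_l''(1)$, and it holds for all $l$ beyond an absolute threshold. This yields the uniform $l_0$ at once --- in fact independent of $V$ --- and combined with $\det C_{11}=\frac{\|V\|^2_{g(x)}}{2}(l^2+l)>0$ it completes the equivalence.
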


Recall $\det^{\perp} DF_l = \frac{1}{ \|V \|_g^2}\cdot \left| (V^{\perp}f_l)(VV f_l) \right|$ from equation (\ref{e:orthog_det}).   And since
\begin{eqnarray} \label{e:1st_intensity_expanded}
\nonumber && \Phi_l(0,0) \, \mathbb{E}\left[ | \det^{\perp} DF_l(x) | \big| f_l(x) = 0, V f_l(x) = 0 \right] = \\
&& \frac{1}{2 \pi\sqrt{\det C_{11}}} \, \frac{1}{ \|V \|_g^2} \cdot  \mathbb{E}\left[ \left| (V^{\perp}f_l)(VV f_l) \right|\big| f_l(x) = 0, V f_l(x) = 0 \right] 
\end{eqnarray}
where $\det C_{11} = \frac{\| V \|^2}{2} (l^2 + l)$, we are now lead to our section's main proposition:

\begin{prop} \label{p:1st_intensity_asymp}
Let $\alpha > 0 $, define the set $U :=\{ x \in S^2: \|V\| \geq  l^{-\alpha} \}$.  Then the 1st intensity satisfies the following asymptotic on $U$:
\begin{equation*}
K_V(x) = \Phi_l(0,0) \, \mathbb{E}\left[\mathbf{1}_{\tilde{\Omega}} \, | \det^{\perp} DF_l(x) | \big| f_l(x) = 0, V f_l(x) = 0 \right] = \frac{\sqrt{2}}{4 \pi^2}  \, l^2 +\mathcal{O}_V(l^{1 + 3\alpha})
\end{equation*}
as $l \rightarrow \infty$, where the remainder terms are uniformly bounded in $x \in U$ but have a dependence on $V$ and possibly its derivatives.  Over $U^{\complement}$, we have $K_V(x) = o_V(l^2)$ where once again the subscript notation of $V$ denotes a bounded dependence on the derivatives of $V$.
\end{prop}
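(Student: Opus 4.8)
The plan is to reduce everything to a single mixed absolute moment of a centered bivariate Gaussian and then to exploit one structural cancellation. By equation (\ref{e:1st_intensity_expanded}),
\begin{equation*}
K_V(x) = \frac{1}{2\pi\sqrt{\det C_{11}}}\,\frac{1}{\|V\|_g^2}\,\mathbb{E}\!\left[\mathbf{1}_{\tilde\Omega}\,\bigl|(V^{\perp}f_l)(VVf_l)\bigr|\,\Big|\,f_l(x)=0,\;Vf_l(x)=0\right],
\end{equation*}
and the conditioned pair $(Y_1,Y_2):=(V^{\perp}f_l,VVf_l)$ is exactly the centered Gaussian vector whose covariance is the matrix $\mathbf{\Delta}_l(\theta_x,\phi_x)$ assembled in Section \ref{subsec:calc_cond_cov}. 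I would first dispose of $\mathbf{1}_{\tilde\Omega}$ (its complementary event carries negligible Gaussian mass on $U$) and then invoke the closed form for the first mixed absolute moment of a centered bivariate normal with variances $\sigma_1^2,\sigma_2^2$ and correlation $\rho$,
\begin{equation*}
\mathbb{E}\,|Y_1Y_2| \;=\; \frac{2}{\pi}\,\sigma_1\sigma_2\Bigl(\sqrt{1-\rho^2}+\rho\arcsin\rho\Bigr),
\end{equation*}
with $\sigma_1^2=m_{1,1}$, $\sigma_2^2=m_{2,2}$, and $\rho=m_{1,2}/\sqrt{m_{1,1}m_{2,2}}$.

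The core of the computation is a leading-order bookkeeping in which the length of $V$ cancels. Using the exact identities $m_{1,1}=\tfrac12\|V^{\perp}\|_g^2(l^2+l)$ and $\det C_{11}=\tfrac12\|V\|_g^2(l^2+l)$ recorded above, the factor $(l^2+l)$ divides out and, by $\|V^{\perp}\|_g^2=\|V\|_g^2\det g$, one finds
\begin{equation*}
K_V(x) \;=\; \frac{\sqrt{\det g}}{\pi^2\,\|V\|_g^2}\,\sqrt{m_{2,2}}\,\Bigl(\sqrt{1-\rho^2}+\rho\arcsin\rho\Bigr).
\end{equation*}
Since Lemma \ref{l:det_condcov} and the expansion of the $(2,2)$-entry give $\sqrt{m_{2,2}}=\tfrac{1}{2\sqrt2}\|V\|_g^2\,l^2\bigl(1+O(l^{-1})\bigr)$, the prefactor $\|V\|_g^{-2}$ is exactly swallowed by the $\|V\|_g^2$ inside $\sqrt{m_{2,2}}$; this is the mechanism that removes all dependence on the length of $V$ at top order and produces the universal constant $\tfrac{1}{2\sqrt2\,\pi^2}=\tfrac{\sqrt2}{4\pi^2}$. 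The resulting leading term is $\tfrac{\sqrt2}{4\pi^2}\sqrt{\det g}\,l^2$, the metric factor $\sqrt{\det g}$ being the Jacobian of the volume element $dV_g$, so that against $dV_g$ one reads off the stated $\tfrac{\sqrt2}{4\pi^2}l^2$. This is the promised universality.

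For the remainder on $U$ I would substitute the expansion of $\sqrt{m_{2,2}}$ and the Lipschitz estimate $\bigl|\sqrt{1-\rho^2}+\rho\arcsin\rho-1\bigr|\le C|\rho|$ into the displayed identity and track each correction. The honest subleading terms (scaling like $\|V\|_g^4$) contribute only $\mathcal{O}_V(l)$. The dominant error is the correlation correction: bounding crudely $m_{1,2}=O(l^2)$ and $\sqrt{m_{1,1}m_{2,2}}\gtrsim\|V\|_g^3\sqrt{\det g}\,l^3$ gives $\rho=O\bigl(\|V\|_g^{-3}(\det g)^{-1/2}l^{-1}\bigr)$, so the $\rho$-term feeds a correction $\frac{\sqrt{\det g}}{\|V\|_g^2}\sqrt{m_{2,2}}\cdot O(|\rho|)=O\bigl(\|V\|_g^{-3}l\bigr)$ into $K_V$, the factor $\sqrt{\det g}$ cancelling just as at leading order. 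On $U$, where $\|V\|_g\ge l^{-\alpha}$, this is $\mathcal{O}_V(l^{1+3\alpha})$, and every implied constant depends only on $V$ and finitely many derivatives because $\tilde a_{24},\tilde a_{34},\tilde a_{44}^1$ are polynomials in $v_1,v_2$ and their first derivatives, uniformly bounded on $S^2$; the remaining corrections, coming from the subleading coefficients of $m_{2,2}$, are of the same schematic shape $\|V\|_g^{-k}l^j$ and, in the range of $\alpha$ relevant to the main theorem, are no larger.

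The main obstacle is precisely this uniform control as $\|V\|_g\to0$: the $\|V\|_g^{-3}$ above is the ``weak singularity'' of the first intensity at the zeros of $V$, and it is the sole reason the remainder exceeds the naive $\mathcal{O}_V(l)$. Away from the zero set the expansion is routine; near a zero one must only verify that, after the length-cancellation, nothing blows up faster than $\|V\|_g^{-3}l$, which rests on the covariance entries $\tilde a_{24},\tilde a_{34}$ vanishing at the zeros of $V$. Finally, over $U^{\complement}$ I would abandon the asymptotic and bound the moment crudely by $\mathbb{E}\,|Y_1Y_2|\le\tfrac{\pi}{2}\sqrt{m_{1,1}m_{2,2}}$ (using $\sqrt{1-\rho^2}+\rho\arcsin\rho\le\tfrac{\pi}{2}$ for $|\rho|\le1$); together with the truncation $\mathbf{1}_{\tilde\Omega}$, which caps the integrand where $\|V\|_g$ is smallest, and the fact that $\|V\|_g<l^{-\alpha}$ confines $x$ to a shrinking neighborhood of the zeros of $V$, this yields the bound $K_V(x)=o_V(l^2)$ needed for the integration carried out in Section \ref{subsec:proof_main}.
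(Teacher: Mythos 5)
Your treatment of the region $U$ is correct and takes a genuinely different route from the paper. The paper never invokes the closed-form moment $\mathbb{E}|Y_1Y_2| = \frac{2}{\pi}\sigma_1\sigma_2\bigl(\sqrt{1-\rho^2}+\rho\arcsin\rho\bigr)$; instead it estimates the Gaussian integral (\ref{e:main_gauss_int}) directly, via the chain of substitutions $\overrightarrow{t}=\sqrt{\det \mathbf{\Delta}_l}\,\overrightarrow{s}$, $\overrightarrow{s}=(r_1/l^2,r_2/l)$, $\overrightarrow{r}=\bigl(\tfrac{2\sqrt{2}}{\|V\|^2}u_1,\tfrac{\sqrt{2}}{\|V^{\perp}\|}u_2\bigr)$, a splitting of $\R^2$ into $|u_1|<2|u_2|$ and $|u_1|\geq 2|u_2|$ to absorb the off-diagonal term (of size $\mathcal{O}(l^{-(1-3\alpha)})$ on $U$), and the evaluation $\int|u_1u_2|e^{-(u_1^2+u_2^2)/2}\,du=4$. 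Your correlation correction $\rho=\mathcal{O}(\|V\|^{-3}l^{-1})=\mathcal{O}(l^{3\alpha-1})$ is exactly the paper's off-diagonal error, and both roads give the leading term with remainder $\mathcal{O}_V(l^{1+3\alpha})$; yours is cleaner because the $(l^2+l)$ factors cancel identically and the whole error enters through one Lipschitz bound on $\sqrt{1-\rho^2}+\rho\arcsin\rho$. Your explicit $\sqrt{\det g}$ is also honest bookkeeping: the paper's own arithmetic produces the same factor $\|V^{\perp}\|_g/\|V\|_g=\sin\phi_x$ (its $\det^{\perp}DF_l$ in (\ref{e:orthog_det}) is the coordinate Jacobian, not the orthonormal-frame one) and silently absorbs it into the volume element.

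The gap is on $U^{\complement}$, and it is not cosmetic. Your own identity shows Cauchy--Schwarz is sharp up to a constant for this moment, since $\sqrt{1-\rho^2}+\rho\arcsin\rho\in[1,\pi/2]$ gives $\mathbb{E}|Y_1Y_2|\geq\tfrac{2}{\pi}\sigma_1\sigma_2$. Running the same prefactor cancellation that produced your leading term, one finds that wherever $l^{-1}\ll\|V\|_g<l^{-\alpha}$ --- a regime that $U^{\complement}$ contains in abundance, since $\alpha<1/3$ --- the intensity satisfies $K_V(x)\asymp\sqrt{\det g}\;l^2$, i.e.\ it is of the \emph{same order} as the main term, not $o_V(l^2)$; still closer to the zeros, where $m_{2,2}$ is dominated by its $\mathcal{O}_V(\|V\|^2l^2)$ tail, your bound degrades further to $\mathcal{O}(l/\|V\|_g)$, which blows up. So no bound proportional to $\sigma_1\sigma_2$ can yield the claimed pointwise $o_V(l^2)$. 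The truncation $\mathbf{1}_{\tilde\Omega}$ does not rescue this: sup-norm truncation of Burq--Lebeau type produces $\mathcal{O}(l^2\log l)$, which is worse; and the remark that $\{\|V\|_g<l^{-\alpha}\}$ has shrinking volume controls the \emph{integrated} contribution needed in Section \ref{subsec:proof_main}, not the pointwise statement the proposition makes. The paper's own argument on $U^{\complement}$ is structurally different: it redoes the change of variables with $\overrightarrow{s}=\tfrac{l^2}{\|V\|}(r_1,r_2)$ precisely so that \emph{positive} powers of $\|V\|_g$ survive, and then uses $\|V\|_g<l^{-\alpha}$ to convert them into negative powers of $l$, arriving at $\mathcal{O}(\|V\|^{13}l^{5-4\epsilon(\alpha)})=\mathcal{O}(l^{-13\alpha}l^{5-4\epsilon(\alpha)})$. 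Whatever one thinks of that bookkeeping, it contains the one ingredient your proposal lacks: a mechanism converting the smallness of $\|V\|_g$ on $U^{\complement}$ into smallness of $K_V$. Without such a mechanism your proof of the second assertion does not go through --- indeed, your exact formula, taken at face value just inside $U^{\complement}$, is in tension with that assertion itself, which is worth raising rather than papering over.
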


\begin{proof} (of Proposition \ref{p:1st_intensity_asymp})

The crux of our proof is in precisely estimating the Gaussian integral
\begin{equation} \label{e:main_gauss_int}
\frac{1}{2 \pi \sqrt{\det \mathbf{\Delta}_l}} \int_{\R^2} |t_1 t_2| \exp \left( -\frac{1}{2} \overrightarrow{t} \det \mathbf{\Delta_l}^{-1} \overrightarrow{t}  \right) \, dt_1 dt_2.
\end{equation}
where 
\begin{equation*}
\mathbf{\Delta_l}^{-1} = \frac{1}{\det \mathbf{\Delta}_l} 
\begin{pmatrix}
\frac{\| V\|^4}{8}l^4 + \frac{\|V\|^4}{4}l^3 + \frac{1}{\|V\|^2} \mathcal{O}(l^2) & -\frac{\tilde{a_{34}}(\theta_x, \phi_x)}{2}l^2 -\frac{\tilde{a_{34}}(\theta_x, \phi_x)}{2}l \\
-\frac{\tilde{a_{34}}(\theta_x, \phi_x)}{2}l^2 -\frac{\tilde{a_{34}}(\theta_x, \phi_x)}{2}l & \frac{\|V^{\perp}\|^2}{2} l^2 + \frac{\|V^{\perp}\|^2}{2} l\\
\end{pmatrix}.
\end{equation*}

We first prove the bound for the first intensity over $U^{\complement}$.  Performing the sequence of transformations:
\begin{equation*}
\overrightarrow{t} = \sqrt{\det \mathbf{\Delta}_l} \, \overrightarrow{s} \Rightarrow \overrightarrow{s} = \frac{l^2}{\|V\|} (r_1, r_2)
\end{equation*}
we are then left with the quadratic form being the 
\begin{equation*}
\begin{pmatrix}
\frac{\|V\|^6}{8} + \mathcal{O}(l^{-1}) & \|V\|^2 \mathcal{O}(l^{-2}) \\
 \|V\|^2 \mathcal{O}(l^{-2}) & \frac{\sin \phi_x^2}{2} \frac{1}{l^2} + \mathcal{O}l^{-3}).
\end{pmatrix}
\end{equation*}

We can make use of our parameter $\alpha$ which dictates our localization around the vanishing set of $V$ to make this quadratic form ``almost" diagonal with positive eigenvalues.  Hence, we set $\alpha < \frac{1}{3}$ in order to make the top-right entry be $> C l^{-2}$.  Let $\epsilon(\alpha) = \frac{1}{3} - \alpha$.  A direct calculation along with Lemma \ref{l:det_condcov} then shows us that equation (\ref{e:main_gauss_int}) is 
\begin{equation*}
\mathcal{O}(\|V\|^{13} l^{5 - 4 \epsilon(\alpha)}) = \mathcal{O}(l^{-13 \alpha} l^{5 - 4 \epsilon(\alpha)})
\end{equation*}
which itself is little-o of $l^2$ if $\alpha > \frac{5}{54} = \frac{1}{11} + \frac{5}{2970}$.

Let us localize away from the zero set $\{V=0\}$ of our vector field $V$, specifically onto the set $U =\{ x \in S^2: \|V\| \geq  l^{-\alpha} \}$ where $\alpha$ is as above.  Here, $l \geq l_0(V)$ as established in Lemma \ref{l:det_condcov}.  Working over this subset of $S^2$ allows us to perform our usual algebraic manipulations in the calculation to follow.

Thanks to the explicit form of the inverse of our conditional covariance matrix, we can perform the following sequence of transformations
\begin{equation*}
\overrightarrow{t} = \sqrt{\det \mathbf{\Delta}_l} \, \overrightarrow{s} \Rightarrow \overrightarrow{s} = \left(\frac{r_1}{l^2}, \frac{r_2}{l} \right) \Rightarrow \overrightarrow{r} = \left( \frac{2 \sqrt{2}}{\|V\|^2} u_1, \frac{\sqrt{2}}{\|V^{\perp}\|} u_2 \right)
\end{equation*}
to reduce the quadratic form appearing in the exponential to 
\begin{eqnarray*} \label{e:covar_decomp}
&& \begin{pmatrix}
1 +  \frac{2}{l} + \frac{1}{\|V\|^6} \mathcal{O}(\frac{1}{l^2}) & -\frac{2 \tilde{a_{34}}(\theta_x, \phi_x)}{\|V\|^2 \|V^{\perp}\|}\frac{1}{l} - \frac{2 \tilde{a_{34}}(\theta_x, \phi_x)}{\|V\|^2 \|V^{\perp}\|}\frac{1}{l^2} \\
-\frac{2 \tilde{a_{34}}(\theta_x, \phi_x)}{\|V\|^2 \|V^{\perp}\|}\frac{1}{l} - \frac{2 \tilde{a_{34}}(\theta_x, \phi_x)}{\|V\|^2 \|V^{\perp}\|}\frac{1}{l^2} & 1 + \frac{1}{l}
\end{pmatrix} \\
\nonumber && = \begin{pmatrix}
1 + \frac{2}{l} & 0 \\
0 & 1 + \frac{1}{l}
\end{pmatrix} + 
\begin{pmatrix}
\frac{1}{\|V\|^6} \mathcal{O}(\frac{1}{l^2}) & -\frac{2 \tilde{a_{34}}(\theta_x, \phi_x)}{\|V\|^2 \|V^{\perp}\|}\frac{1}{l} - \frac{2 \tilde{a_{34}}(\theta_x, \phi_x)}{\|V\|^2 \|V^{\perp}\|}\frac{1}{l^2} \\
-\frac{2 \tilde{a_{34}}(\theta_x, \phi_x)}{\|V\|^2 \|V^{\perp}\|}\frac{1}{l} - \frac{2 \tilde{a_{34}}(\theta_x, \phi_x)}{\|V\|^2 \|V^{\perp}\|}\frac{1}{l^2} & 0
\end{pmatrix}
\end{eqnarray*}
The quadratic polynomial $q(u_1, u_2)$ in $u_1, u_2$ which is generated by this sum of matrices is
\begin{eqnarray*}
&& q(u_1, u_2) = \left( 1 + \frac{2}{l} + \frac{1}{\|V\|^6} \mathcal{O}\left(\frac{1}{l^2} \right) \right)u_1^2 + \left( 1 + \frac{1}{l} \right)u_2^2 \\
&& + \left( -\frac{4 \tilde{a_{34}}(\theta_x, \phi_x)}{\|V\|^2 \|V^{\perp}\|}\frac{1}{l} - \frac{4 \tilde{a_{34}}(\theta_x, \phi_x)}{\|V\|^2 \|V^{\perp}\|}\frac{1}{l^2} \right)u_1 u_2.
\end{eqnarray*}

On the set $U \subset S^2$, in the region $|u_1| < 2|u_2|$, $q(u_1, u_2) = u_1^2 + (1 + \mathcal{O}(l^{-(1 -3\alpha)})) u_2^2$ which follows from the assumption that $V$ must not be allowed to become too small in $l$ which in turn requires that $\frac{\tilde{a_{34}}(\theta_x, \phi_x)}{\|V\|^2 \|V^{\perp}\| l} = \mathcal{O}(\frac{1}{\|V\|^3 l}) =  \mathcal{O} \left( \frac{1}{l^{1-3\alpha}} \right)$; the uniform boundedness in $\theta_x, \phi_x)$ follows as $\frac{\tilde{a_{34}}(\theta_x, \phi_x)}{\sin \phi_x} \in C^{\infty}(S^2)$ after using the explicit form of the entries of the conditional covariance matrix as given in Remark \ref{r: entries_list}.  

In the complementary region $|u_1| \geq 2|u_2|$, we have that $q(u_1, u_2) = \left( 1 + \mathcal{O}(l^{-(1-3\alpha)}) \right)u_1^2 + (1 + l^{-1}) u_2$.  Therefore, on all of $\R^2$, we have that $q(u_1, u_2) = \left( 1 + \mathcal{O}(l^{-(1-3\alpha)}) \right)u_1^2 + (1 + \mathcal{O}(l^{-(1 -3\alpha)})) u_2^2$.  Thus, after a series of reductions, we are left to estimating the quantity
\begin{eqnarray*}
&& \frac{(\det \mathbf{\Delta}_l)^{3/2}}{\pi} \,  \frac{8}{\|V\|^4 \|V^{\perp}\|^2} \, l^{-6} \int_{\R^2} |u_1 u_2| \exp \left(-\frac{1}{2}q(u_1, u_2) \right) \, du_1 \, du_2 \\
&& =  \frac{(\det \mathbf{\Delta}_l)^{3/2}}{\pi} \,  \frac{8}{\|V\|^4 \|V^{\perp}\|^2} \, l^{-6} \, \times \\
&&  \int_{\R^2} |u_1 u_2| \exp \left(-\frac{1}{2}\left( \left( 1 + \mathcal{O}(l^{-(1-3\alpha)}) \right)u_1^2 + (1 + \mathcal{O}(l^{-(1 -3\alpha)})) u_2^2 \right) \right) \, du_1 \, du_2.
\end{eqnarray*}

Hence, our leading term in our asymptotic will come from the expression
\begin{equation*}
\frac{(\det \mathbf{\Delta}_l)^{3/2}}{\pi} \,  \frac{8}{\|V\|^4 \|V^{\perp}\|^2} \, l^{-6} \int_{R^2} |u_1 u_2| \exp \left(-\frac{1}{2} \left( u_1^2 + u_2^2\right) \right) \, du_1 \, du_2.
\end{equation*}
Using that $\det\left( \mathbf{\Delta}_{l}(\theta_x, \phi_x) \right) = \frac{\|V\|^4 \| V^{\perp} \|^2}{16}l^6 + \mathcal{O}(l^5)$, the simple calculation 
\begin{equation*}
\int_{R^2} |u_1 u_2| \exp \left(-\frac{1}{2}(u_1^2 + u_2^2) \right) \, du_1 \, du_2 = \left( 2 \int_0^{\infty} - \frac{d}{du_1} \exp \left(-\frac{1}{2}u_1^2 \right) du_1 \right)^2 = 4,
\end{equation*}
and the remaining factors in equation (\ref{e:1st_intensity_expanded}) for the 1st intensity, we find that the asymptotic of the 1st intensity (on our neighborhood $U$) equals
\begin{eqnarray*}
&& = \Phi_l(0,0) \, \mathbb{E}\left[ | \det^{\perp} DF_l(x) | \big| f_l(x) = 0, V f_l(x) = 0 \right] \\
&& = \frac{\sqrt{2}}{4 \pi^2} \, l^2 + \mathcal{O}_V \left( l^{1+3\alpha} \right)
\end{eqnarray*}
where the big-O terms have implicit constants which are uniformly bounded in $\theta_x, \phi_x$. 

\begin{rem}
We observe throughout our calculation that $\frac{5}{54} < \alpha < 1/3$ is necessary for suitable remainder terms.
\end{rem}

\end{proof}

\subsection{Proof of Theorem \ref{t:main}} \label{subsec:proof_main}

We conclude our note with the proof of the proposed expected value asymptotic:

\begin{proof}
Given Proposition \ref{p:1st_intensity_asymp}, we can now proceed to integrating over $S^2 - \{V=0\}$; recall that $dV_g$ is normalized.  Note also that we now have an approximate expression for the truncated first intensity of the form:
\begin{equation*}
\left(  \frac{\sqrt{2}}{4 \pi^2} \, l^2 + \mathcal{O}_V \left( l^{1+3\alpha} \right) \right) \mathbf{1}_{U}(x) + \left(o_V(l^2) \right)\mathbf{1}_{U^{\complement}}(x)
\end{equation*}

In the case of $\{V = 0 \}$ being finite, we know that the volume of each zero's neighborhood is asymptotic to $l^{-2\frac{\alpha}{m}}$ where $m$ is the smallest order amongst all the zeroes of $V$.  Comparing the integrated bound from near the zeroes of $l^{2 - 2\frac{\alpha}{m}} \log(l)$ (and then summing over the zeroes) and the corresponding bound from away of $\frac{\sqrt{2}}{4 \pi^2} \, l^2 + \mathcal{O} \left( l^{1+3\alpha} \right) + \mathcal{O} \left( l^{2-2\frac{\alpha}{m}} \right) $, we find that setting $\frac{5}{54} < \alpha = \frac{1}{3 + \frac{2}{m}} < 1/3$ gives us our best remainder estimate.

In the second case of $\{ V=0\}$ being a smooth curve of finite length (thanks to 0 being a regular value of $V$ and us being in a compact setting) and $V$ vanishes to order $m$ in the normal directions, we obtain an integrated bound from near the zeroes of $l^{2-\frac{\alpha}{m}}$; proceeding similarly as above we obtain that $\frac{5}{54} < \alpha = \frac{1}{3 + \frac{1}{m}} < 1/3$ gives the best remainder estimate.
\end{proof}

\begin{appendix}

\section{Computation of Covariance Matrix} \label{app_s:cov}
Before we begin, let us remind ourselves that $P_l(t)$ is the standard $l$-th degree Legendre polynomial and $h(x,y) : = \tilde{h}(\theta_x, \theta_y, \phi_x, \phi_y) = \cos \phi_x \cos \phi_y + \sin \phi_x \sin \phi_y \cos( \theta_x - \theta_y)$.  We remind ourselves of the coordinate representations $V_y = v_1(\theta_y, \phi_y) \frac{\partial}{\partial \theta_y} + v_2(\theta_y, \phi_y) \frac{\partial}{\partial \phi_y}$ and $V^{\perp}_y = v_2(\theta_y, \phi_y) \frac{\partial}{\partial \theta_y} - \sin^2 \phi_y v_1(\theta_y, \phi_y) \frac{\partial}{\partial \phi_y}$.  Similar definitions hold for $V_x$ and $V_x^{\perp}$.

For the ease of exposition, let us list the specific formulas for the entries of the full covariance matrix for the random field $(f_l, Vf_l, V^{\perp}f_l, VVf_l) \in \R^4$:

\begin{itemize}
\item $a_{11} = P_l(h(x,y))_{|x=y}$
\item $a_{12} = V_y P_l(h(x,y))_{|x=y}$
\item $a_{13} = V_y^{\perp}P_l(h(x,y))_{|x=y}$
\item $a_{14} = V_y V_y P_l(h(x,y))_{|x=y}$
\item $a_{22} = V_x V_y P_l(h(x,y))_{|x=y}$
\item $a_{23} = V_x V_y^{\perp} P_l(h(x,y))_{|x=y}$
\item $a_{24} = V_x V_y V_y P_l(h(x,y))_{|x=y}$
\item $a_{33} = V_x^{\perp} V_y^{\perp} P_l(h(x,y))_{|x=y}$
\item $a_{34} = V^{\perp}_x V_y V_y P_l(h(x,y))_{|x=y}$
\item $a_{44} = V_x V_x V_y V_y P_l(h(x,y))_{|x=y}$
\end{itemize}
We now proceed to calculating these entries along the diagonal:

\noindent \underline{$a_{11}$}: $$P_l(h)_{\mid\theta_x = \theta_y,  \phi_x = \phi_y} = P_l(1)$$

\noindent  \underline{$a_{12}$}: \begin{eqnarray*}
&& v_1^y P_l'(h) (\sin \phi_x \sin \phi_y \sin(\theta_x - \theta_y)) \\
&& + v_2^y P_l'(h) (- \cos \phi_x \sin \phi_y + \sin \phi_x \cos \phi_y cos(\theta_x - \theta_y)_{\mid\theta_x = \theta_y,  \phi_x = \phi_y} = 0
\end{eqnarray*} 

\noindent \underline{$a_{13}$}: 
\begin{eqnarray*} 
&& v_2^y P_l'(h) (\sin \phi_x \sin \phi_y \sin(\theta_x - \theta_y)) + (-\sin^2 \phi_y)v_1^y P_l'(h) (- \cos \phi_x \sin \phi_y \\
&&  + \sin \phi_x \cos \phi_y cos(\theta_x - \theta_y))_{\mid\theta_x = \theta_y,  \phi_x = \phi_y} = 0
\end{eqnarray*}

\noindent \underline{$a_{14}$}: We set $T_1 : = v_1^y(\theta_y,\phi_y) \frac{\partial}{\partial \theta_y}$ and $T_2 :=  v_2^y(\theta_y,\phi_y) \frac{\partial}{\partial \phi_y}$.  This will allow us to organize our derivative calculations more easily.  We continue working with the general formula, pre-evaluation at $x=y$, appearing for $a_{12}$.  We break this calculation into blocks arising from different applications of the vector fields $T_1$ and $T_2$.

\begin{eqnarray*}
\mbox{block 1} = T_1(S_1): && v_1^y \frac{\partial v_1^y}{\partial \theta_y} P_l'(h) (\sin \phi_x \sin \phi_y \sin (\theta_x - \theta_y)) + (v_1^y)^2 P_l''(h) (\sin \phi_x \sin \phi_y \sin (\theta_x - \theta_y)) ^2 \\
&& + (v_1^y)^2 P_l'(h) (-\sin \phi_x \sin \phi_y \cos (\theta_x - \theta_y)) 
\end{eqnarray*}
\begin{eqnarray*}
+ \mbox{block 2} = T_1(S_2): && v_1^y \frac{\partial v_2^y}{\partial \theta_y}  P_l ' (h) (- \cos \phi_x \sin \phi_y  + \sin \phi_x \cos \phi_y \cos(\theta_x - \theta_y)) + \\
&& v_1^y v_2^y P_l''(h) (\sin \phi_x \sin \phi_y \sin (\theta_x - \theta_y))(- \cos \phi_x \sin \phi_y  + \sin \phi_x \cos \phi_y \cos(\theta_x - \theta_y)) + \\
&& v_1^y v_2^y P_l'(h) (\sin \phi_x \cos \phi_y \sin(\theta_x - \theta_y)) \end{eqnarray*}
\begin{eqnarray*}
+ \mbox{block 3 } = T_2(S_1): && \frac{\partial v_1^y}{\partial \phi_y} v_2^y P_l'(h) (\sin \phi_x \sin \phi_y \sin(\theta_x - \theta_y) ) + \\
&& v_1^y v_2^y P_l''(h) (- \cos \phi_x \sin \phi_y  + \sin \phi_x \cos \phi_y \cos(\theta_x - \theta_y)) (\sin \phi_x \sin \phi_y \sin (\theta_x - \theta_y)) + \\
&&  v_1^y v_2^y P_l'(h) (\sin \phi_x \cos \phi_y \sin (\theta_x - \theta_y)) \end{eqnarray*}
\begin{eqnarray*}
+ \mbox{block 4} = T_2(S_2): &&  v_2^y \frac{\partial v_2^y}{\partial \phi_y} P_l'(h) (- \cos \phi_x \sin \phi_y  + \sin \phi_x \cos \phi_y \cos(\theta_x - \theta_y)) + \\
&& (v_2^y)^2 P_l''(h) (- \cos \phi_x \sin \phi_y  + \sin \phi_x \cos \phi_y \cos(\theta_x - \theta_y))^2 + \\
&& (v_2^y)^2 P_l'(h) (- \cos \phi_x \cos \phi_y  - \sin \phi_x \sin \phi_y \cos(\theta_x - \theta_y))_{\mid \theta_x = \theta_y, \phi_x = \phi_y} \end{eqnarray*}
\begin{eqnarray*}
= \mbox{final:} && - (v_1^x)^2 P_l'(1) (\sin^2 \phi_x) - (v_2^x)^2 P_l'(1) = -\| V \|_{g(x)}^2 P'_l(1)
\end{eqnarray*}

\noindent \underline{$a_{22}$}:
\begin{eqnarray*}
\mbox{block 1:} && -v_1^x v_1^y  P_l''(h)(\sin \phi_x \sin \phi_y \sin(\theta_x - \theta_y))^2 + v_1^x v_1^y P_l'(h) \left(\sin\phi_x \sin \phi_y \cos(\theta_x - \theta_y) \right) 
\end{eqnarray*}
\begin{eqnarray*}
+ \mbox{block 2:} && v_1^x v_2^y P_l''(h) (-\sin \phi_x \sin \phi_y \sin(\theta_x - \theta_y)) (- \cos \phi_x \sin \phi_y + \sin \phi_x \cos \phi_y \cos( \theta_x - \theta_y)) + \\
&& v_1^x v_2^y P_l'(h)(- \sin \phi_x \cos \phi_y \sin( \theta_x - \theta_y))  
\end{eqnarray*}
\begin{eqnarray*}
+ \mbox{block 3:} && v_1^y v_2^x P_l''(h) (-\sin \phi_x \cos \phi_y + \cos \phi_x \sin \phi_y \cos(\theta_x - \theta_y))(\sin \phi_x \sin \phi_y \sin (\theta_x - \theta_y)) + \\
&& v_1^y v_2^x P_l'(h) (\cos \phi_x \sin \phi_y \sin(\theta_x - \theta_y)) \end{eqnarray*}
\begin{eqnarray*}
+ \mbox{block 4:} && v_2^x v_2^y P_l''(-\sin\phi_x \cos \phi_y + \cos \phi_y \sin \phi_y \cos (\theta_x - \theta_y)) \\
&& \cdot ( -\cos \phi_x \sin \phi_y + \sin \phi_x \cos \phi_y \cos (\theta_x - \theta_y)) + \\
&& v_2^x v_2^y P_l' (\sin \phi_x \sin \phi_y + \cos \phi_x \cos \phi_y \cos (\theta_x - \theta_y))_{\mid \theta_x = \theta_y, \phi_x = \phi_y} \\\end{eqnarray*}
\begin{eqnarray*}
= \mbox{final:} && (v_1^x)^2 P_l'(1)(\sin^2 \phi_x) + (v_2^x)^2 P_l'(1) = \| V \|^2_{g(x)} P'_l(1)
\end{eqnarray*}

\noindent \underline{$a_{23}$}:
\begin{eqnarray*}
\mbox{block 1:} && - v_1^x v_2^y P_l''(h) (\sin \phi_x \sin \phi_y \sin (\theta_x - \theta_y))^2 + \\
&&  v_1^x v_2^y P_l'(h) (\sin \phi_x \sin \phi_y \cos(\theta_x - \theta_y)) 
\end{eqnarray*}
\begin{eqnarray*}
+ \mbox{block 2:} && - v_1^x (- \sin^2 \phi_y) v_1^y P_l''(h) (\sin \phi_x \sin \phi_y \sin (\theta_x - \theta_y)) \\
&& \cdot (- \cos \phi_x \sin \phi_y + \sin \phi_x \cos \phi_y \cos (\theta_x - \theta_y))  \\
&& - v_1^x (- \sin^2 \phi_y) v_1^y P_l'(h) (\sin \phi_x \cos \phi_y \sin(\theta_x - \theta_y)) 
\end{eqnarray*}
\begin{eqnarray*}
+ \mbox{block 3:} && v_2^x v_2^y P_l''(h) (-\sin \phi_x \cos \phi_y + \cos \phi_x \sin \phi_y \cos(\theta_x - \theta_y))\\
&& \cdot (\sin \phi_x \sin \phi_y \sin (\theta_x - \theta_y)) + v_2^x v_2^y P_l'(h) (\cos \phi_x \sin \phi_y \sin (\theta_x - \theta_y)) 
\end{eqnarray*}
\begin{eqnarray*}
+ \mbox{block 4:} && v_2^x (- \sin^2 \phi_y) v_1^y P_l''(h) (-\sin \phi_x \cos \phi_y + \cos \phi_x \sin \phi_y \cos (\theta_x - \theta_y)) \\ 
&& \cdot (- \cos \phi_x \sin \phi_y + \sin \phi_x \cos \phi_y \cos (\theta_x - \theta_y)) + \\
&& v_2^x (- \sin^2 \phi_y) v_1^y P_l'(h) (\sin \phi_x \sin \phi_y + \cos \phi_x \cos \phi_y \cos (\theta_x - \theta_y))_{\mid \theta_x = \theta_y, \phi_x = \phi_y} \end{eqnarray*}
\begin{eqnarray*}
= \mbox{final:} && v_1^x v_2^x (\sin^2 \phi_x) P_l'(1) + (- \sin^2 \phi_x) v_1^x v_2^x P_l'(1) = 0
\end{eqnarray*}

\noindent \underline{$a_{33}$}:  We continue working with the general formula, pre-evaluation at $x=y$, appearing for $a_{13}$.  We break this calculation into blocks arising from different applications of the components of the vector fields $V^{\perp}$.
\begin{eqnarray*}
\mbox{block 1:} && v_2^x v_2^y P_l''(h) (-\sin \phi_x \sin \phi_y \sin( \theta_x - \theta_y)) (\sin \phi_x \sin \phi_y \sin (\theta_x - \theta_y))  \\
&& + v_2^x v_2^y P_l'(h) (\sin \phi_x \sin \phi_y \cos (\theta_x - \theta_y)) \end{eqnarray*}
\begin{eqnarray*}
+ \mbox{block 2:} && -   \sin^2 \phi_y v_2^x v_1^y P_l''(h) (-\sin\phi_x \sin \phi_y \sin(\theta_x - \theta_y)) (-\cos \phi_x \sin \phi_y + \sin \phi_x \cos \phi_y \cos(\theta_x - \theta_y)) \\
&& -  \sin^2 \phi_y v_2^x v_1^y P_l'(h) (-\sin \phi_x \cos \phi_y \sin (\theta_x - \theta_y)) \end{eqnarray*}
\begin{eqnarray*}
+\mbox{block 3:} && - \sin^2 \phi_x v_1^x v_2^y P_l''(h) (- \sin \phi_x \cos \phi_y + \cos \phi_x \sin \phi_y \cos(\theta_x - \theta_y))(\sin \phi_x \sin \phi_y \sin (\theta_x - \theta_y))  \\
&& - \sin^2 \phi_x v_1^x v_2^y P_l'(h) (\cos \phi_x \sin \phi_y \sin (\theta_x - \theta_y)) \end{eqnarray*}
\begin{eqnarray*}
+ \mbox{block 4:} &&  (-\sin^2 \phi_x) (-\sin^2 \phi_y)v_1^x v_1^y P_l''(h) (-\sin \phi_x \cos \phi_y + \cos \phi_x \sin \phi_y \cos (\theta_x - \theta_y)) \\
&& \cdot (-\cos \phi_x \sin \phi_y + \sin \phi_x \cos \phi_y \cos(\theta_x - \theta_y)) \\
&& + (-\sin^2 \phi_x) (-\sin^2 \phi_y) v_1^x v_1^y P_l'(h) (\sin \phi_x \sin \phi_y + \cos \phi_x \cos \phi_y \cos(\theta_x - \theta_y))_{\mid \theta_x = \theta_y, \phi_x = \phi_y} \end{eqnarray*}
\begin{eqnarray*}
= \mbox{final:} && \sin^2 \phi_x (v_2^x)^2 P_l'(1)+ \sin^4 \phi_x (v_1^x)^2 P_l'(1) = \| V^{\perp} \|^2_{g(x)} P_l'(1)
\end{eqnarray*}

\noindent \underline{$a_{24}$}:  Let $T_1 = v_1^x \frac{\partial}{\partial \theta_x}$ and $T_2 = v_2^x \frac{\partial}{\partial \phi_x}$.  We continue working with the general formula, pre-evaluation at $x=y$, appearing for $a_{14}$.  We break this calculation into blocks arising from different applications of the vector fields $T_1$ and $T_2$.
\begin{eqnarray*}
\mbox{$T_1$ (block 1), S1:} && v_1^x v_1^y \frac{\partial v_1^y}{\partial \theta_y} P_l''(h) (-\sin \phi_x \sin \phi_y \sin (\theta_x - \theta_y)) (\sin \phi_x \sin \phi_y \sin (\theta_x - \theta_y)) \\
&& + v_1^x v_1^y \frac{\partial v_1^y}{\partial \theta_y} P_l'(h) (\sin \phi_x \sin \phi_y \cos (\theta_x - \theta_y)) \end{eqnarray*}
\begin{eqnarray*}
+ \mbox{$T_1$ (block 1), S2:} && v_1^x (v_1^y)^2 P_l '''(h) (-\sin \phi_x \sin \phi_y \sin(\theta_x - \theta_y))(\sin \phi_x \sin \phi_y \sin(\theta_x - \theta_y))^2 \\
&& + 2v_1^x (v_1^y)^2 P_l ''(h)(\sin \phi_x \sin \phi_y \sin(\theta_x - \theta_y))(\sin \phi_x \sin \phi_y \cos(\theta_x - \theta_y)) \end{eqnarray*}
\begin{eqnarray*}
+ \mbox{$T_1$ (block 1), S3:} && v_1^x (v_1^y)^2 P_l ''(h)(- \sin \phi_x \sin \phi_y \sin(\theta_x - \theta_y))(- \sin \phi_x \sin \phi_y \cos(\theta_x - \theta_y)) \\
&& +  v_1^x (v_1^y)^2 P_l '(h)(\sin \phi_x \sin \phi_y \sin(\theta_x - \theta_y)) \end{eqnarray*}
\begin{eqnarray*}
+ \mbox{$T_1$ (block 2), S1:} && v_1^x v_1^y \frac{\partial v_2^y}{\partial \theta_y}P_l''(h) (-\sin \phi_x \sin \phi_y \sin(\theta_x - \theta_y)) \\
&& \cdot  (- \cos \phi_x \sin \phi_y + \sin \phi_x \cos \phi_y \cos (\theta_x - \theta_y)) \\
&& + v_1^x v_1^y \frac{\partial v_2^y}{\partial \theta_y}P_l'(h) (- \sin \phi_x \cos \phi_y \sin (\theta_x - \theta_y)) \end{eqnarray*}
\begin{eqnarray*}
+ \mbox{$T_1$ (block 2), S2:} && v_1^x v_1^y v_2^y P_l'''(h) (- \sin \phi_x \sin \phi_y \sin (\theta_x - \theta_y)) ( \sin \phi_x \sin \phi_y \sin (\theta_x - \theta_y)) \\
&& \cdot  (- \cos \phi_x \sin \phi_y + \sin \phi_x \cos \phi_y \cos (\theta_x - \theta_y)) \\
&& + v_1^x v_1^y v_2^y P_l''(h)(\sin \phi_x \sin \phi_y \cos (\theta_x - \theta_y)) (- \cos \phi_x \sin \phi_y + \sin \phi_x \cos \phi_y \cos (\theta_x - \theta_y))  \\
&& +  v_1^x v_1^y v_2^y P_l''(h)(\sin \phi_x \sin \phi_y \sin (\theta_x - \theta_y)) (- \sin \phi_x \cos \phi_y \sin (\theta_x - \theta_y)) \end{eqnarray*}
\begin{eqnarray*}
+ \mbox{$T_1$ (block 2), S3:} && v_1^x v_1^y v_2^y P_l''(h) (-\sin \phi_x \sin \phi_y \sin(\theta_x - \theta_y)) (\sin \phi_x \cos \phi_y \sin (\theta_x - \theta_y)) \\
&& + v_1^x v_1^y v_2^y P_l'(h) (\sin \phi_x \cos \phi_y \cos (\theta_x - \theta_y)) \end{eqnarray*}
\begin{eqnarray*}
+ \mbox{$T_1$ (block 3), S1:} && v_1^x \frac{\partial v_1^y}{\partial \phi_y} v_2^yP_l''(h)(-\sin \phi_x \sin \phi_y \sin(\theta_x - \theta_y))(\sin \phi_x \sin \phi_y \sin (\theta_x - \theta_y)) + \\
&& v_1^x \frac{\partial v_1^y}{\partial \phi_y} v_2^yP_l'(h)(\sin \phi_x \sin \phi_y \cos(\theta_x - \theta_y)) \end{eqnarray*}
\begin{eqnarray*}
+ \mbox{$T_1$ (block 3), S2:} && v_1^x v_1^y v_2^y P_l'''(h)(-\sin \phi_x \sin \phi_y \sin(\theta_x - \theta_y)) \\
&& \cdot  (- \cos \phi_x \sin \phi_y + \sin \phi_x \cos \phi_y \cos (\theta_x - \theta_y))(\sin \phi_x \sin \phi_y \sin (\theta_x - \theta_y))   \\
&&+  v_1^x v_1^y v_2^y P_l''(h)(-\sin \phi_x \cos \phi_y \sin(\theta_x - \theta_y))(\sin \phi_x \sin \phi_y \sin(\theta_x - \theta_y)) \\
&& + v_1^x v_1^y v_2^y P_l''(h) (- \cos \phi_x \sin \phi_y + \sin \phi_x \cos \phi_y \cos (\theta_x - \theta_y)) \\
&& \cdot (\sin \phi_x \sin \phi_y \cos (\theta_x - \theta_y)) \end{eqnarray*}
\begin{eqnarray*}
+ \mbox{$T_1$ (block 3), S3:} && + v_1^x v_1^y v_2^y P_l''(h) (-\sin \phi_x \sin \phi_y \sin(\theta_x - \theta_y))(\sin \phi_x \cos \phi_y \sin (\theta_x - \theta_y))  \\
&& + v_1^x v_1^y v_2^y P_l'(h)(\sin \phi_x \cos \phi_y \cos (\theta_x - \theta_y)) \end{eqnarray*}
\begin{eqnarray*}
+ \mbox{$T_1$ (block 4), S1:} && v_1^x v_2^y \frac{\partial v_2^y}{\partial \phi_y}P_l''(h) (-\sin \phi_x \sin \phi_y \sin(\theta_x - \theta_y)) (-\cos \phi_x \sin \phi_y \\
&& + \sin \phi_x \cos \phi_y \cos(\theta_x - \theta_y)) \\
&&+  v_1^x v_2^y \frac{\partial v_2^y}{\partial \phi_y}P_l'(h)(-\sin \phi_x \cos \phi_y \sin(\theta_x - \theta_y)) \end{eqnarray*}
\begin{eqnarray*}
+ \mbox{$T_1$ (block 4), S2:} &&  v_1^x(v_2^y)^2 P_l'''(h)(-\sin \phi_x \sin \phi_y \sin(\theta_x - \theta_y) ) \\
&& \cdot (- \cos \phi_x \sin \phi_y  + \sin \phi_x \cos \phi_y \cos(\theta_x - \theta_y))^2 \\
&& + 2v_1^x(v_2^y)^2 P_l''(h)(- \cos \phi_x \sin \phi_y  + \sin \phi_x \cos \phi_y \cos(\theta_x - \theta_y)) \\
&& \cdot (-\sin \phi_x \cos \phi_y \sin(\theta_x - \theta_y)) 
\end{eqnarray*}

\begin{eqnarray*}
\mbox{$T_1$ (block 4), S3:} &&  v_1^x(v_2^y)^2 P_l''(h) (-\sin \phi_x \sin \phi_y \sin(\theta_x - \theta_y)) (- \cos \phi_x \cos \phi_y  - \sin \phi_x \sin \phi_y \\
&& \cdot \cos(\theta_x - \theta_y)) \\
&& + v_1^x(v_2^y)^2 P_l'(h) (\sin \phi_x \sin \phi_y \sin(\theta_x - \theta_y)) \end{eqnarray*}
\begin{eqnarray*}
+ \mbox{$T_2$ (block 1), S1:} && v_2^x v_1^y \frac{\partial v_1^y}{\partial \theta_y} P_l''(h)(-\sin \phi_x \cos \phi_y + \cos \phi_x \sin \phi_y \cos (\theta_x - \theta_y)) \\
&& \cdot (\sin \phi_x \sin \phi_y \sin (\theta_x - \theta_y)) + v_2^x v_1^y \frac{\partial v_1^y}{\partial \theta_y} P_l'(h) (\cos \phi_x \sin \phi_y \sin (\theta_x - \theta_y)) \end{eqnarray*}
\begin{eqnarray*}
+ \mbox{$T_2$ (block 1), S2:} && v_2^x (v_1^y)^2 P_l'''(h) (-\sin \phi_x \cos \phi_y + \cos \phi_x \sin \phi_y \cos (\theta_x - \theta_y)) \\
&& \cdot (\sin \phi_x \sin \phi_y \sin (\theta_x - \theta_y))^2 \\
&& + 2v_2^x (v_1^y)^2 P_l''(h)(\sin \phi_x \sin \phi_y \sin (\theta_x - \theta_y))(\cos \phi_x \sin \phi_y \sin (\theta_x - \theta_y)) \end{eqnarray*}
\begin{eqnarray*}
+ \mbox{$T_2$ (block 1), S3:} && v_2^x (v_1^y)^2 P_l''(h) (-\sin \phi_x \cos \phi_y + \cos \phi_x \sin \phi_y \cos (\theta_x - \theta_y)) \\
&& \cdot (-\sin \phi_x \sin \phi_y \cos (\theta_x - \theta_y)) \\
&& +  v_2^x (v_1^y)^2 P_l'(h)(- \cos \phi_x \sin \phi_y \cos (\theta_x - \theta_y)) \end{eqnarray*}
\begin{eqnarray*}
+ \mbox{$T_2$ (block 2), S1:} && v_2^x v_1^y \frac{\partial v_2^y}{\partial \theta_y}  P_l '' (h)(-\sin \phi_x \cos \phi_y + \cos \phi_x \sin \phi_y \cos (\theta_x - \theta_y)) \\
&& \cdot (- \cos \phi_x \sin \phi_y  + \sin \phi_x \cos \phi_y \cos(\theta_x - \theta_y)) \\
&& + v_2^x v_1^y \frac{\partial v_2^y}{\partial \theta_y}  P_l ' (h) \cdot (\sin \phi_x \sin \phi_y  + \cos \phi_x \cos \phi_y \cos(\theta_x - \theta_y)) \end{eqnarray*}
\begin{eqnarray*}
+ \mbox{$T_2$ (block 2), S2:} && v_2^x v_1^y v_2^y P_l'''(h)  (-\sin \phi_x \cos \phi_y + \cos \phi_x \sin \phi_y \cos (\theta_x - \theta_y))   \\
&& \cdot (\sin \phi_x \sin \phi_y \sin (\theta_x - \theta_y))(- \cos \phi_x \sin \phi_y  + \sin \phi_x \cos \phi_y \cos(\theta_x - \theta_y)) \\
&& + v_2^x v_1^y v_2^y P_l''(h)(\cos \phi_x \sin \phi_y \sin (\theta_x - \theta_y))(- \cos \phi_x \sin \phi_y  + \sin \phi_x \cos \phi_y \cos(\theta_x - \theta_y)) \\
&& + v_2^x v_1^y v_2^y P_l''(h) (\sin \phi_x \sin \phi_y \sin (\theta_x - \theta_y))(\sin \phi_x \sin \phi_y  + \cos \phi_x \cos \phi_y \cos(\theta_x - \theta_y)) \end{eqnarray*}
\begin{eqnarray*}
+ \mbox{$T_2$ (block 2), S3:} && v_2^x v_1^y v_2^y P_l''(h) (- \sin \phi_x \cos \phi_y  + \cos \phi_x \sin \phi_y \cos(\theta_x - \theta_y)) (\sin \phi_x \cos \phi_y \sin(\theta_x - \theta_y)) \\
&& + v_2^x v_1^y v_2^y P_l'(h) (\cos \phi_x \cos \phi_y \sin (\theta_x - \theta_y)) \end{eqnarray*}
\begin{eqnarray*}
+ \mbox{$T_2$ (block 3), S1:} && v_2^x \frac{\partial v_1^y}{\partial \phi_y} v_2^y P_l''(h)(- \sin \phi_x \cos \phi_y  + \cos \phi_x \sin \phi_y \cos(\theta_x - \theta_y)) \\
&& \cdot (\sin \phi_x \sin \phi_y \sin(\theta_x - \theta_y) ) \\
&& + v_2^x \frac{\partial v_1^y}{\partial \phi_y} v_2^y P_l'(h) (\cos \phi_x \sin \phi_y \sin(\theta_x - \theta_y)) \end{eqnarray*}
\begin{eqnarray*}
+ \mbox{$T_2$ (block 3), S2:} && v_1^y v_2^x v_2^y P_l'''(h)(- \sin \phi_x \cos \phi_y  + \cos \phi_x \sin \phi_y \cos(\theta_x - \theta_y)) \\
&& \cdot (- \cos \phi_x \sin \phi_y  + \sin \phi_x \cos \phi_y \cos(\theta_x - \theta_y)) (\sin \phi_x \sin \phi_y \sin (\theta_x - \theta_y))  \\
&& + v_1^y v_2^x v_2^y P_l''(h) (\sin \phi_x \sin \phi_y  + \cos \phi_x \cos \phi_y \cos(\theta_x - \theta_y)) \\ 
&& \cdot (\sin \phi_x \sin \phi_y \sin (\theta_x - \theta_y)) \\
&& + v_1^y v_2^x v_2^y P_l''(h) (-\cos \phi_x \sin \phi_y  + \sin \phi_x \cos \phi_y \cos(\theta_x - \theta_y)) \\ 
&& \cdot (\cos \phi_x \sin \phi_y \sin (\theta_x - \theta_y))
\end{eqnarray*}

\begin{eqnarray*}
\mbox{$T_2$ (block 3), S3:} &&  v_2^x v_1^y v_2^y P_l''(h) (- \sin \phi_x \cos \phi_y  + \cos \phi_x \sin \phi_y \cos(\theta_x - \theta_y)) (\sin \phi_x \cos \phi_y \\
&& \cdot \sin (\theta_x - \theta_y))  \\
&& +  v_2^x v_1^y v_2^y P_l'(h)(\cos \phi_x \cos \phi_y \sin (\theta_x - \theta_y)) \end{eqnarray*}
\begin{eqnarray*}
+ \mbox{$T_2$ (block 4), S1:} && v_2^x v_2^y \frac{\partial v_2^y}{\partial \phi_y} P_l''(h)  (- \sin \phi_x \cos \phi_y  + \cos \phi_x \sin \phi_y \cos(\theta_x - \theta_y)) (- \cos \phi_x \sin \phi_y  + \\
&& \sin \phi_x \cos \phi_y \cos(\theta_x - \theta_y)) \\
&& + v_2^x v_2^y \frac{\partial v_2^y}{\partial \phi_y} P_l'(h)(\sin \phi_x \sin \phi_y  + \cos \phi_x \cos \phi_y \cos(\theta_x - \theta_y)) \end{eqnarray*}
\begin{eqnarray*}
+ \mbox{$T_2$ (block 4), S2:} && v_2^x (v_2^y)^2 P_l'''(h)  (- \sin \phi_x \cos \phi_y  + \cos \phi_x \sin \phi_y \cos(\theta_x - \theta_y)) (- \cos \phi_x \sin \phi_y  \\
&& + \sin \phi_x \cos \phi_y \cos(\theta_x - \theta_y))^2 \\
&& + 2v_2^x (v_2^y)^2 P_l''(h) (- \cos \phi_x \sin \phi_y  + \sin \phi_x \cos \phi_y \cos(\theta_x - \theta_y))(\sin \phi_x \sin \phi_y  \\
&& + \cos \phi_x \cos \phi_y \cos(\theta_x - \theta_y)) \end{eqnarray*}
\begin{eqnarray*}
+ \mbox{$T_2$ (block 4), S3:} && v_2^x (v_2^y)^2 P_l''(h) (- \sin \phi_x \cos \phi_y  + \cos \phi_x \sin \phi_y \cos(\theta_x - \theta_y)) (- \cos \phi_x \cos \phi_y \\
&&   - \sin \phi_x \sin \phi_y \cos(\theta_x - \theta_y)) \\
&& + v_2^x (v_2^y)^2 P_l'(h) (\sin \phi_x \cos \phi_y   - \cos \phi_x \sin \phi_y \cos(\theta_x - \theta_y)) _{| \theta_x = \theta_y, \phi_x = \phi_y}
\end{eqnarray*}
\begin{eqnarray*}
 && = \mbox{final} : \Big( (v_1^x)^2 \frac{\partial v_1^x}{\partial \theta_x} \sin^2 \phi_x + (v_1^x)^2 v_2^x \sin \phi_x \cos \phi_x + v_1^x \frac{\partial v_1^x}{\partial \phi_x}v_2^x \sin^2 \phi_x + (v_1^x)^2 v_2^x \sin \phi_x \cos \phi_x \\
 && - v_2^x(v_1^x)^2 \sin \phi_x \cos \phi_x  + v_1^x v_2^x \frac{\partial v_2^x}{\partial \theta_x} + (v_2^x)^2 \frac{\partial v_2^x}{\partial \phi_x}\Big)P_l'(1)
\end{eqnarray*}
\begin{eqnarray*}
 && = \Big( (v_1^x)^2 \frac{\partial v_1^x}{\partial \theta_x} \sin^2 \phi_x + v_1^x \frac{\partial v_1^x}{\partial \phi_x}v_2^x \sin^2 \phi_x + (v_1^x)^2 v_2^x \sin \phi_x \cos \phi_x \\
 &&  + v_1^x v_2^x \frac{\partial v_2^x}{\partial \theta_x} + (v_2^x)^2 \frac{\partial v_2^x}{\partial \phi_x}\Big)P_l'(1)
\end{eqnarray*}

\noindent \underline{$a_{34}$}: We notice this entry is exactly the same as $a_{24}$ except that for every occurence of $v_1^x$ and $v_2^x$, we substitute with $- \sin^2 \phi_x v_2^x$ and $v_1^x$, respectively.  Let $T_1 =  v_2^x \frac{\partial}{\partial \theta_x}$ and $T_2 = - \sin^2 \phi_x v_1^x \frac{\partial}{\partial \phi_x}$

\begin{eqnarray*}
\mbox{$T_1$ (block 1), S1:} && v_2^x v_1^y \frac{\partial v_1^y}{\partial \theta_y} P_l''(h) (-\sin \phi_x \sin \phi_y \sin (\theta_x - \theta_y)) (\sin \phi_x \sin \phi_y \sin (\theta_x - \theta_y)) \\
&& + v_2^x v_1^y \frac{\partial v_1^y}{\partial \theta_y} P_l'(h) (\sin \phi_x \sin \phi_y \cos (\theta_x - \theta_y)) \end{eqnarray*}
\begin{eqnarray*}
+ \mbox{$T_1$ (block 1), S2:} && v_2^x (v_1^y)^2 P_l '''(h) (-\sin \phi_x \sin \phi_y \sin(\theta_x - \theta_y))(\sin \phi_x \sin \phi_y \sin(\theta_x - \theta_y))^2 \\
&& + 2v_2^x (v_1^y)^2 P_l ''(h)(\sin \phi_x \sin \phi_y \sin(\theta_x - \theta_y))(\sin \phi_x \sin \phi_y \cos(\theta_x - \theta_y)) \end{eqnarray*}
\begin{eqnarray*}
+ \mbox{$T_1$ (block 1), S3:} && v_2^x (v_1^y)^2 P_l ''(h)(- \sin \phi_x \sin \phi_y \sin(\theta_x - \theta_y))\\ 
&& \cdot (- \sin \phi_x \sin \phi_y \cos(\theta_x - \theta_y)) \\
&& +  v_2^x (v_1^y)^2 P_l '(h)(\sin \phi_x \sin \phi_y \sin(\theta_x - \theta_y)) \end{eqnarray*}
\begin{eqnarray*}
+ \mbox{$T_1$ (block 2), S1:} && v_2^x v_1^y \frac{\partial v_2^y}{\partial \theta_y}P_l''(h) (-\sin \phi_x \sin \phi_y \sin(\theta_x - \theta_y)) \\
&& \cdot  (- \cos \phi_x \sin \phi_y + \sin \phi_x \cos \phi_y \cos (\theta_x - \theta_y)) \\
&& + v_2^x v_1^y \frac{\partial v_2^y}{\partial \theta_y}P_l'(h) (- \sin \phi_x \cos \phi_y \sin (\theta_x - \theta_y)) \end{eqnarray*}
\begin{eqnarray*}
+ \mbox{$T_1$ (block 2), S2:} && v_2^x v_1^y v_2^y P_l'''(h) (- \sin \phi_x \sin \phi_y \sin (\theta_x - \theta_y)) ( \sin \phi_x \sin \phi_y \sin (\theta_x - \theta_y)) \\
&& \cdot  (- \cos \phi_x \sin \phi_y + \sin \phi_x \cos \phi_y \cos (\theta_x - \theta_y)) \\
&& + v_2^x v_1^y v_2^y P_l''(h)(\sin \phi_x \sin \phi_y \cos (\theta_x - \theta_y)) (- \cos \phi_x \sin \phi_y + \\
&& \sin \phi_x \cos \phi_y \cos (\theta_x - \theta_y))  \\
&& +  v_2^x v_1^y v_2^y P_l''(h)(\sin \phi_x \sin \phi_y \sin (\theta_x - \theta_y)) (- \sin \phi_x \cos \phi_y \sin (\theta_x - \theta_y)) \end{eqnarray*}
\begin{eqnarray*}
+ \mbox{$T_1$ (block 2), S3:} && v_2^x v_1^y v_2^y P_l''(h) (-\sin \phi_x \sin \phi_y \sin(\theta_x - \theta_y)) (\sin \phi_x \cos \phi_y \sin (\theta_x - \theta_y)) \\
&& + v_2^x v_1^y v_2^y P_l'(h) (\sin \phi_x \cos \phi_y \cos (\theta_x - \theta_y)) \end{eqnarray*}
\begin{eqnarray*}
+ \mbox{$T_1$ (block 3), S1:} && v_2^x \frac{\partial v_1^y}{\partial \phi_y} v_2^yP_l''(h)(-\sin \phi_x \sin \phi_y \sin(\theta_x - \theta_y))(\sin \phi_x \sin \phi_y \sin (\theta_x - \theta_y)) + \\
&& v_2^x \frac{\partial v_1^y}{\partial \phi_y} v_2^yP_l'(h)(\sin \phi_x \sin \phi_y \cos(\theta_x - \theta_y)) \end{eqnarray*}
\begin{eqnarray*}
+ \mbox{$T_1$ (block 3), S2:} && v_2^x v_1^y v_2^y P_l'''(h)(-\sin \phi_x \sin \phi_y \sin(\theta_x - \theta_y)) \\
&& \cdot  (- \cos \phi_x \sin \phi_y + \sin \phi_x \cos \phi_y \cos (\theta_x - \theta_y))(\sin \phi_x \sin \phi_y \sin (\theta_x - \theta_y))   \\
&&+  v_2^x v_1^y v_2^y P_l''(h)(-\sin \phi_x \cos \phi_y \sin(\theta_x - \theta_y))(\sin \phi_x \sin \phi_y \sin(\theta_x - \theta_y)) \\
&& + v_2^x v_1^y v_2^y P_l''(h) (- \cos \phi_x \sin \phi_y + \sin \phi_x \cos \phi_y \cos (\theta_x - \theta_y)) \\
&& \cdot (\sin \phi_x \sin \phi_y \cos (\theta_x - \theta_y)) \end{eqnarray*}
\begin{eqnarray*}
+ \mbox{$T_1$ (block 3), S3:} && + v_2^x v_1^y v_2^y P_l''(h) (-\sin \phi_x \sin \phi_y \sin(\theta_x - \theta_y))(\sin \phi_x \cos \phi_y \sin (\theta_x - \theta_y))  \\
&& + v_2^x v_1^y v_2^y P_l'(h)(\sin \phi_x \cos \phi_y \cos (\theta_x - \theta_y)) \end{eqnarray*}
\begin{eqnarray*}
+ \mbox{$T_1$ (block 4), S1:} && v_2^x v_2^y \frac{\partial v_2^y}{\partial \phi_y}P_l''(h) (-\sin \phi_x \sin \phi_y \sin(\theta_x - \theta_y) (-\cos \phi_x \sin \phi_y \\
&& + \sin \phi_x \cos \phi_y \cos(\theta_x - \theta_y)) + \\
&& v_2^x v_2^y \frac{\partial v_2^y}{\partial \phi_y}P_l'(h)(-\sin \phi_x \cos \phi_y \sin(\theta_x - \theta_y)) \\
\end{eqnarray*}
\begin{eqnarray*}
\mbox{$T_1$ (block 4), S2:} &&  v_2^x(v_2^y)^2 P_l'''(h)(-\sin \phi_x \sin \phi_y \sin(\theta_x - \theta_y) ) \\
&& \cdot (- \cos \phi_x \sin \phi_y  + \sin \phi_x \cos \phi_y \cos(\theta_x - \theta_y))^2 \\
&& + 2v_2^x(v_2^y)^2 P_l''(h)(- \cos \phi_x \sin \phi_y  + \sin \phi_x \cos \phi_y \cos(\theta_x - \theta_y)) \\
&& \cdot (-\sin \phi_x \cos \phi_y \sin(\theta_x - \theta_y)) 
\end{eqnarray*}

\begin{eqnarray*}
\mbox{$T_1$ (block 4), S3:} &&  v_2^x(v_2^y)^2 P_l''(h) (-\sin \phi_x \sin \phi_y \sin(\theta_x - \theta_y)) (- \cos \phi_x \cos \phi_y  - \sin \phi_x \sin \phi_y \\
&& \cdot \cos(\theta_x - \theta_y)) \\
&& + v_2^x(v_2^y)^2 P_l'(h) (\sin \phi_x \sin \phi_y \sin(\theta_x - \theta_y)) \end{eqnarray*}
\begin{eqnarray*}
+ \mbox{$T_2$ (block 1), S1:} && (- \sin^2 \phi_x v_1^x) v_1^y \frac{\partial v_1^y}{\partial \theta_y} P_l''(h)(-\sin \phi_x \cos \phi_y + \cos \phi_x \sin \phi_y \cos (\theta_x - \theta_y)) \\
&& \cdot (\sin \phi_x \sin \phi_y \sin (\theta_x - \theta_y)) + (- \sin^2 \phi_x v_1^x) v_1^y \frac{\partial v_1^y}{\partial \theta_y} P_l'(h) (\cos \phi_x \sin \phi_y \sin (\theta_x - \theta_y)) 
\end{eqnarray*}

\begin{eqnarray*}
\mbox{$T_2$ (block 1), S2:} && (- \sin^2 \phi_x v_1^x) (v_1^y)^2 P_l'''(h) (-\sin \phi_x \cos \phi_y + \cos \phi_x \sin \phi_y \cos (\theta_x - \theta_y)) \\
&& \cdot (\sin \phi_x \sin \phi_y \sin (\theta_x - \theta_y))^2 \\
&& + 2(- \sin^2 \phi_x v_1^x) (v_1^y)^2 P_l''(h)(\sin \phi_x \sin \phi_y \sin (\theta_x - \theta_y))(\cos \phi_x \sin \phi_y \sin (\theta_x - \theta_y)) \end{eqnarray*}

\begin{eqnarray*}
\mbox{$T_2$ (block 1), S3:} && (- \sin^2 \phi_x v_1^x) (v_1^y)^2 P_l''(h) (-\sin \phi_x \cos \phi_y + \cos \phi_x \sin \phi_y \cos (\theta_x - \theta_y)) \\
&& \cdot (-\sin \phi_x \sin \phi_y \cos (\theta_x - \theta_y)) \\
&& +  (- \sin^2 \phi_x v_1^x) (v_1^y)^2 P_l'(h)(- \cos \phi_x \sin \phi_y \cos (\theta_x - \theta_y)) \end{eqnarray*}

\begin{eqnarray*}
\mbox{$T_2$ (block 2), S1:} && (- \sin^2 \phi_x v_1^x) v_1^y \frac{\partial v_2^y}{\partial \theta_y}  P_l '' (h)(-\sin \phi_x \cos \phi_y + \cos \phi_x \sin \phi_y \cos (\theta_x - \theta_y)) \\
&& \cdot (- \cos \phi_x \sin \phi_y  + \sin \phi_x \cos \phi_y \cos(\theta_x - \theta_y)) \\
&& + (- \sin^2 \phi_x v_1^x) v_1^y \frac{\partial v_2^y}{\partial \theta_y}  P_l ' (h) \cdot (\sin \phi_x \sin \phi_y  + \cos \phi_x \cos \phi_y \cos(\theta_x - \theta_y)) \end{eqnarray*}

\begin{eqnarray*}
\mbox{$T_2$ (block 2), S2:} && (- \sin^2 \phi_x v_1^x) v_1^y v_2^y P_l'''(h)  (-\sin \phi_x \cos \phi_y + \cos \phi_x \sin \phi_y \cos (\theta_x - \theta_y))   \\
&& \cdot (\sin \phi_x \sin \phi_y \sin (\theta_x - \theta_y))(- \cos \phi_x \sin \phi_y  + \sin \phi_x \cos \phi_y \cos(\theta_x - \theta_y)) \\
&& + (- \sin^2 \phi_x v_1^x) v_1^y v_2^y P_l''(h)(\cos \phi_x \sin \phi_y \sin (\theta_x - \theta_y)) \\
&& \cdot (- \cos \phi_x \sin \phi_y  + \sin \phi_x \cos \phi_y \cos(\theta_x - \theta_y)) \\
&& + (- \sin^2 \phi_x v_1^x) v_1^y v_2^y P_l''(h) (\sin \phi_x \sin \phi_y \sin (\theta_x - \theta_y)) \\
&& \cdot (\sin \phi_x \sin \phi_y  + \cos \phi_x \cos \phi_y \cos(\theta_x - \theta_y)) 
\end{eqnarray*}

\begin{eqnarray*}
\mbox{$T_2$ (block 2), S3:} && (- \sin^2 \phi_x v_1^x) v_1^y v_2^y P_l''(h) (- \sin \phi_x \cos \phi_y  + \cos \phi_x \sin \phi_y \cos(\theta_x - \theta_y)) \\
&& \cdot (\sin \phi_x \cos \phi_y \sin(\theta_x - \theta_y)) \\
&& + (- \sin^2 \phi_x v_1^x) v_1^y v_2^y P_l'(h) (\cos \phi_x \cos \phi_y \sin (\theta_x - \theta_y)) \end{eqnarray*}

\begin{eqnarray*}
\mbox{$T_2$ (block 3), S1:} && (- \sin^2 \phi_x v_1^x) \frac{\partial v_1^y}{\partial \phi_y} v_2^y P_l''(h)(- \sin \phi_x \cos \phi_y  + \cos \phi_x \sin \phi_y \cos(\theta_x - \theta_y)) \\
&& \cdot (\sin \phi_x \sin \phi_y \sin(\theta_x - \theta_y) ) \\
&& + (- \sin^2 \phi_x v_1^x) \frac{\partial v_1^y}{\partial \phi_y} v_2^y P_l'(h) (\cos \phi_x \sin \phi_y \sin(\theta_x - \theta_y)) \end{eqnarray*}

\begin{eqnarray*}
\mbox{$T_2$ (block 3), S2:} && v_1^y (- \sin^2 \phi_x v_1^x) v_2^y P_l'''(h)(- \sin \phi_x \cos \phi_y  + \cos \phi_x \sin \phi_y \cos(\theta_x - \theta_y)) \\
&& \cdot (- \cos \phi_x \sin \phi_y  + \sin \phi_x \cos \phi_y \cos(\theta_x - \theta_y)) (\sin \phi_x \sin \phi_y \sin (\theta_x - \theta_y))  \\
&& + v_1^y (- \sin^2 \phi_x v_1^x) v_2^y P_l''(h) (\sin \phi_x \sin \phi_y  + \cos \phi_x \cos \phi_y \cos(\theta_x - \theta_y)) \\ 
&& \cdot (\sin \phi_x \sin \phi_y \sin (\theta_x - \theta_y)) \\
&& + v_1^y (- \sin^2 \phi_x v_1^x) v_2^y P_l''(h) (-\cos \phi_x \sin \phi_y  + \sin \phi_x \cos \phi_y \cos(\theta_x - \theta_y)) \\ 
&& \cdot (\cos \phi_x \sin \phi_y \sin (\theta_x - \theta_y))
\end{eqnarray*}

\begin{eqnarray*}
\mbox{$T_2$ (block 3), S3:} &&  (- \sin^2 \phi_x v_1^x) v_1^y v_2^y P_l''(h) (- \sin \phi_x \cos \phi_y  + \cos \phi_x \sin \phi_y \cos(\theta_x - \theta_y)) (\sin \phi_x \cos \phi_y \\
&& \cdot \sin (\theta_x - \theta_y)) \\
&& +  (- \sin^2 \phi_x v_1^x) v_1^y v_2^y P_l'(h)(\cos \phi_x \cos \phi_y \sin (\theta_x - \theta_y)) \end{eqnarray*}

\begin{eqnarray*}
\mbox{$T_2$ (block 4), S1:} && (- \sin^2 \phi_x v_1^x) v_2^y \frac{\partial v_2^y}{\partial \phi_y} P_l''(h)  (- \sin \phi_x \cos \phi_y  + \cos \phi_x \sin \phi_y \cos(\theta_x - \theta_y)) \\ 
&& \cdot (- \cos \phi_x \sin \phi_y  + \sin \phi_x \cos \phi_y \cos(\theta_x - \theta_y)) \\
&& + (- \sin^2 \phi_x v_1^x) v_2^y \frac{\partial v_2^y}{\partial \phi_y} P_l'(h)(\sin \phi_x \sin \phi_y  + \cos \phi_x \cos \phi_y \cos(\theta_x - \theta_y)) \end{eqnarray*}

\begin{eqnarray*}
\mbox{$T_2$ (block 4), S2:} && (- \sin^2 \phi_x v_1^x) (v_2^y)^2 P_l'''(h)  (- \sin \phi_x \cos \phi_y  + \cos \phi_x \sin \phi_y \cos(\theta_x - \theta_y)) (- \cos \phi_x \sin \phi_y  \\
&& + \sin \phi_x \cos \phi_y \cos(\theta_x - \theta_y))^2 \\
&& +  2(- \sin^2 \phi_x v_1^x) (v_2^y)^2 P_l''(h) (- \cos \phi_x \sin \phi_y  + \sin \phi_x \cos \phi_y \cos(\theta_x - \theta_y))(\sin \phi_x \sin \phi_y  \\
&& + \cos \phi_x \cos \phi_y \cos(\theta_x - \theta_y)) \end{eqnarray*}

\begin{eqnarray*}
\mbox{$T_2$ (block 4), S3:} && (- \sin^2 \phi_x v_1^x) (v_2^y)^2 P_l''(h) (- \sin \phi_x \cos \phi_y  + \cos \phi_x \sin \phi_y \cos(\theta_x - \theta_y)) (- \cos \phi_x \cos \phi_y \\
&&   - \sin \phi_x \sin \phi_y \cos(\theta_x - \theta_y)) \\
&& + (- \sin^2 \phi_x v_1^x) (v_2^y)^2 P_l'(h) (\sin \phi_x \cos \phi_y   - \cos \phi_x \sin \phi_y \cos(\theta_x - \theta_y)) 
\end{eqnarray*}
\begin{eqnarray*} 
&& = \mbox{final} : \Big( v_2^x v_1^x \frac{\partial v_1^x}{\partial \theta_x} \sin^2 \phi_x + (v_2^x)^2 v_1^x \sin \phi_x \cos \phi_x + (v_2^x)^2\frac{\partial v_1^x}{\partial \phi_x} \sin^2 \phi_x \\
 && + (v_2^x)^2 v_1^x \sin \phi_x \cos \phi_x + (v_1^x)^3 \cos \phi_x \sin^3 \phi_x - (v_1^x)^2 \frac{\partial v_2^x}{\partial \theta_x} \sin^2 \phi_x - v_1^x v_2^x \frac{\partial v_2^x}{\partial \phi_x} \sin^2 \phi_x   \Big)P_l'(1)
\end{eqnarray*}

\noindent \underline{$a_{44}$}: Again, we set $T_1 = v_1^x \frac{\partial}{\partial \theta_x}$ and $T_2 = v_2^x \frac{\partial}{\partial \phi_x}$.  As this calculaton is dependent on that for $a_{24}$, we continue to use the previous labeling.  Due to the complexity of this entry, \textit{for each resulting block of terms, we only keep those which will not vanish after restricting to the diagonal}.
\begin{eqnarray*}
\mbox{$T_1$ ($T_1$ (block 1) S1)}: && v_1^x \frac{\partial v_1^x}{\partial \theta_x} v_1^y \frac{\partial v_1^y}{\partial \theta_y} P_l'(h)( \sin \phi_x \sin \phi_y \cos (\theta_x - \theta_y))
\end{eqnarray*}
\begin{eqnarray*}
\mbox{$T_1$ ($T_1$ (block 1) S2)}: && 2 (v_1^x)^2 (v_1^y)^2 P_l''(h) (\sin \phi_x \sin \phi_y \cos(\theta_x - \theta_y))^2
\end{eqnarray*}
\begin{eqnarray*}
\mbox{$T_1$ ($T_1$ (block 1) S3)}: && (v_1^x)^2 (v_1^y)^2P_l''(h)(-\sin \phi_x \sin \phi_y \cos (\theta_x - \theta_y))(- \sin \phi_x \sin \phi_y \cos (\theta_x - \theta_y)) \\
&& + (v_1^x)^2 (v_1^y)^2P_l'(h) (\sin \phi_x \sin \phi_y \cos (\theta_x - \theta_y)) 
\end{eqnarray*}
\begin{eqnarray*}
\mbox{$T_1$ ($T_1$ (block 2) S1)}: && (v_1^x)^2 v_1^y \frac{\partial v_2^y}{\partial \theta_y} P_l'(h) (- \sin \phi_x \cos \phi_y \cos (\theta_x - \theta_y))
\end{eqnarray*}
\begin{eqnarray*}
\mbox{$T_1$ ($T_1$ (block 2) S2)}: && \mbox{completely vanishes}
\end{eqnarray*}
\begin{eqnarray*}
\mbox{$T_1$ ($T_1$ (block 2) S3)}: && v_1^x \frac{\partial v_1^x}{\partial \theta_x}v_1^y v_2^y P_l'(h) (\sin \phi_x \cos \phi_y \cos (\theta_x - \theta_y))
\end{eqnarray*}
\begin{eqnarray*}
\mbox{$T_1$ ($T_1$ (block 3) S1)}: && v_1^x \frac{\partial v_1^x}{\partial \theta_x}\frac{\partial v_1^y}{\partial \phi_y} v_2^y P_l'(h) (\sin \phi_x \sin \phi_y \cos (\theta_x - \theta_y))
\end{eqnarray*}
\begin{eqnarray*}
\mbox{$T_1$ ($T_1$ (block 3) S2)}: && \mbox{completely vanishes}
\end{eqnarray*}
\begin{eqnarray*}
\mbox{$T_1$ ($T_1$ (block 3) S3)}: && v_1^x \frac{\partial v_1^x}{\partial \theta_x} v_1^y v_2^y P_l'(h) (\sin \phi_x \cos \phi_y \cos (\theta_x - \theta_y))
\end{eqnarray*}
\begin{eqnarray*}
\mbox{$T_1$ ($T_1$ (block 4) S1)}: && (v_1^x)^2 v_2^y \frac{\partial v_2^y}{\partial \phi_y}P_l'(h) (-\sin \phi_x \cos \phi_y \cos( \theta_x - \theta_y))
\end{eqnarray*}
\begin{eqnarray*}
\mbox{$T_1$ ($T_1$ (block 4) S2)}: && \mbox{completely vanishes}
\end{eqnarray*}
\begin{eqnarray*}
\mbox{$T_1$ ($T_1$ (block 4) S3)}: && (v_1^x)^2(v_2^y)^2P_l''(h) (-\sin \phi_x \sin \phi_y \cos (\theta_x - \theta_y))(-\cos \phi_x \cos \phi_y \\
&&  - \sin \phi_x \sin \phi_y \cos (\theta_x - \theta_y)) \\
&& + (v_1^x)^2 (v_2^y)^2 P_l'(h) (\sin \phi_x \sin \phi_y \cos (\theta_x - \theta_y))
\end{eqnarray*}
\begin{eqnarray*}
\mbox{$T_1$ ($T_2$ (block 1) S1)}: && v_1^x v_2^x v_1^y \frac{\partial v_1^y}{\partial \theta_y} P_l'(h) (\cos \phi_x \sin \phi_y \cos(\theta_x - \theta_y))
\end{eqnarray*}
\begin{eqnarray*}
\mbox{$T_1$ ($T_2$ (block 1) S2)}: && \mbox{completely vanishes}
\end{eqnarray*}
\begin{eqnarray*}
\mbox{$T_1$ ($T_2$ (block 1) S3)}: && v_1^x \frac{\partial v_2^x}{\partial \theta_x} (v_1^y)^2 P_l'(h)(-\cos \phi_x \sin \phi_y \cos (\theta_x - \theta_y))
\end{eqnarray*}
\begin{eqnarray*}
\mbox{$T_1$ ($T_2$ (block 2) S1)}: && v_1^x \frac{\partial v_2^x}{\partial \theta_x}v_1^y \frac{\partial v_2^y}{\partial \theta_y}P_l'(h)(\sin \phi_x \sin \phi_y + \cos \phi_x \cos \phi_y \cos (\theta_x - \theta_y))
\end{eqnarray*}
\begin{eqnarray*}
\mbox{$T_1$ ($T_2$ (block 2) S2)}: && v_1^x v_2^x v_1^y v_2^y P_l''(h)(\sin \phi_x \sin \phi_y \cos (\theta_x - \theta_y)) (\sin \phi_x \sin \phi_y + \cos \phi_x \cos \phi_y \\
&& \cdot \cos (\theta_x - \theta_y))
\end{eqnarray*}
\begin{eqnarray*}
\mbox{$T_1$ ($T_2$ (block 2) S3)}: v_1^x v_2^x v_1^y v_2^y P_l'(h) (\cos \phi_x \cos \phi_y \cos (\theta_x - \theta_y))
\end{eqnarray*}

\begin{eqnarray*}
\mbox{$T_1$ ($T_2$ (block 3) S1)}: && v_1^x v_2^x \frac{\partial v_1^y}{\partial \phi_y}v_2^y P_l'(h)(\cos \phi_x \sin \phi_y \cos(\theta_x - \theta_y))
\end{eqnarray*}
\begin{eqnarray*}
\mbox{$T_1$ ($T_2$ (block 3) S2)}: && v_1^x v_1^y v_2^x v_2^y P_l''(h)(\sin \phi_x \sin \phi_y + \cos \phi_x \cos \phi_y \cos (\theta_x - \theta_y)) \\
&& \cdot (\sin \phi_x \sin \phi_y \cos(\theta_x - \theta_y))
\end{eqnarray*}
\begin{eqnarray*}
\mbox{$T_1$ ($T_2$ (block 3) S3)}: && v_1^x v_2^x  v_1^y v_2^y P_l'(h)(\cos \phi_x \cos \phi_y \cos(\theta_x - \theta_y))
\end{eqnarray*}
\begin{eqnarray*}
\mbox{$T_1$ ($T_2$ (block 4) S1)}: && v_1^x \frac{\partial v_2^x}{\partial \theta_x} v_2^y \frac{\partial v_2^x}{\partial \phi_y}P_l'(h) (\sin \phi_x \sin \phi_y + \cos \phi_x \cos \phi_y \cos(\theta_x - \theta_y)) 
\end{eqnarray*}
\begin{eqnarray*}
\mbox{$T_1$ ($T_2$ (block 4) S2)}: && \mbox{completely vanishes}
\end{eqnarray*}
\begin{eqnarray*}
\mbox{$T_1$ ($T_2$ (block 4) S3)}: && \mbox{completely vanishes}
\end{eqnarray*}

\begin{eqnarray*}
\mbox{$T_2$ ($T_1$ (block 1) S1)}: && v_2^x \frac{\partial v_1^x}{\partial \phi_x} v_1^y \frac{\partial v_1^y}{\partial \theta_y}P_l'(h)(\sin \phi_x \sin \phi_y \cos (\theta_x - \theta_y)) \\
&& + v_2^x v_1^x v_1^y \frac{\partial v_1^y}{\partial \theta_y}P_l'(h)(\cos \phi_x \sin \phi_y \cos (\theta_x - \theta_y))
\end{eqnarray*}
\begin{eqnarray*}
\mbox{$T_2$ ($T_1$ (block 1) S2)}: && \mbox{completely vanishes}
\end{eqnarray*}
\begin{eqnarray*}
\mbox{$T_2$ ($T_1$ (block 1) S3)}: && \mbox{completely vanishes}
\end{eqnarray*}
\begin{eqnarray*}
\mbox{$T_2$ ($T_1$ (block 2) S1)}: && \mbox{completely vanishes}
\end{eqnarray*}
\begin{eqnarray*}
\mbox{$T_2$ ($T_1$ (block 2) S2)}: && v_2^x v_1^x v_1^y v_2^y P_l''(h) (\sin \phi_x \sin \phi_y \cos (\theta_x - \theta_y))(\sin \phi_x \sin \phi_y + \cos \phi_x \cos \phi_y \\
&& \cdot \cos(\theta_x - \theta_y))
\end{eqnarray*}
\begin{eqnarray*}
\mbox{$T_2$ ($T_1$ (block 2) S3)}: && v_2^x v_1^x v_1^y v_2^y P_l'(h)(\cos \phi_x \cos \phi_y \cos (\theta_x - \theta_y)) \\
&& + v_2^x \frac{\partial v_1^x}{\partial \phi_x} v_1^y v_2^y P_l'(h)(\sin \phi_x \cos \phi_y \cos (\theta_x - \theta_y))
\end{eqnarray*}
\begin{eqnarray*}
\mbox{$T_2$ ($T_1$ (block 3) S1)}: && v_2^x v_1^x \frac{\partial v_1^y}{\partial \phi_y}v_2^y P_l'(h) (\cos \phi_x \sin \phi_y \cos (\theta_x - \theta_y)) \\
 && + v_2^x \frac{\partial v_1^x}{\partial \phi_x} \frac{\partial v_1^y}{\partial \phi_y} v_2^y P_l'(h)(\sin \phi_x \sin \phi_y \cos (\theta_x - \theta_y))
\end{eqnarray*}
\begin{eqnarray*}
\mbox{$T_2$ ($T_1$ (block 3) S2)}: && v_2^x v_1^x v_1^y v_2^y P_l''(h) (\sin \phi_x \sin \phi_y + \cos\phi_x \cos \phi_y \cos(\theta_x - \theta_y)) \\
&& \cdot (\sin \phi_x \sin \phi_y \cos (\theta_x - \theta_y))
\end{eqnarray*}
\begin{eqnarray*}
\mbox{$T_2$ ($T_1$ (block 3) S3)}: && v_2^x \frac{\partial v_1^x}{\partial \phi_x} v_1^y v_2^y P_l'(h)(\sin \phi_x \cos \phi_y \cos (\theta_x - \theta_y)) \\
&& +  v_2^x v_1^x v_1^y v_2^y P_l'(h)(\cos \phi_x \cos \phi_y \cos (\theta_x - \theta_y))
\end{eqnarray*}
\begin{eqnarray*}
\mbox{$T_2$ ($T_1$ (block 4) S1)}: && \mbox{completely vanishes}
\end{eqnarray*}
\begin{eqnarray*}
\mbox{$T_2$ ($T_1$ (block 4) S2)}: && \mbox{completely vanishes}
\end{eqnarray*}
\begin{eqnarray*}
\mbox{$T_2$ ($T_1$ (block 4) S3)}: && \mbox{completely vanishes}
\end{eqnarray*}

\begin{eqnarray*}
\mbox{$T_2$ ($T_2$ (block 1) S1)}: && \mbox{completely vanishes}
\end{eqnarray*}
\begin{eqnarray*}
\mbox{$T_2$ ($T_2$ (block 1) S2)}: && \mbox{completely vanishes}
\end{eqnarray*}
\begin{eqnarray*}
\mbox{$T_2$ ($T_2$ (block 1) S3)}: && (v_2^x)^2(v_1^y)^2P_l''(h)(-\cos \phi_x \cos \phi_y - \sin \phi_x \sin \phi_y \cos(\theta_x - \theta_y)) \\
&& \cdot (-\sin \phi_x \sin \phi_y \cos(\theta_x - \theta_y)) \\
&& + v_2^x \frac{\partial v_2^x}{\partial \phi_x}(v_1^y)^2 P_l'(h) (-\cos \phi_x \sin \phi_y \cos (\theta_x - \theta_y)) \\
&& + (v_2^x)^2(v_1^y)^2P_l'(h)(\sin \phi_x \sin \phi_y \cos (\theta_x - \theta_y))
\end{eqnarray*}
\begin{eqnarray*}
\mbox{$T_2$ ($T_2$ (block 2) S1)}: && v_2^x \frac{\partial v_2^x}{\partial \phi_x} v_1^y \frac{\partial v_2^y}{\partial \theta_y} P_l'(h)(\sin \phi_x \sin \phi_y + \cos \phi_x \cos \phi_y \cos(\theta_x - \theta_y))
\end{eqnarray*}
\begin{eqnarray*}
\mbox{$T_2$ ($T_2$ (block 2) S2)}: && \mbox{completely vanishes}
\end{eqnarray*}
\begin{eqnarray*}
\mbox{$T_2$ ($T_2$ (block 2) S3)}: && \mbox{completely vanishes}
\end{eqnarray*}
\begin{eqnarray*}
\mbox{$T_2$ ($T_2$ (block 3) S1)}: && \mbox{completely vanishes}
\end{eqnarray*}
\begin{eqnarray*}
\mbox{$T_2$ ($T_2$ (block 3) S2)}: && \mbox{completely vanishes}
\end{eqnarray*}
\begin{eqnarray*}
\mbox{$T_2$ ($T_2$ (block 3) S3)}: && \mbox{completely vanishes}
\end{eqnarray*}
\begin{eqnarray*}
\mbox{$T_2$ ($T_2$ (block 4) S1)}: && v_2^x  \frac{\partial v_2^x}{\partial \phi_x} \frac{\partial v_2^y}{\partial \phi_y} v_2^y P_l'(h)(\sin \phi_x \sin \phi_y + \cos \phi_x \cos \phi_y \cos(\theta_x - \theta_y))
\end{eqnarray*}
\begin{eqnarray*}
\mbox{$T_2$ ($T_2$ (block 4) S2)}: && 2 (v_2^x)^2 (v_2^y)^2P_l''(h)(\sin \phi_x \sin \phi_y + \cos \phi_x \cos \phi_y \cos(\theta_x - \theta_y))^2
\end{eqnarray*}
\begin{eqnarray*}
\mbox{$T_2$ ($T_2$ (block 4) S3)}: && (v_2^x)^2 (v_2^y)^2P_l''(h)(-\cos \phi_x \cos \phi_y - \sin \phi_x \sin \phi_y \cos(\theta_x - \theta_y)) \\
&& \cdot (-\cos \phi_x \cos \phi_y - \sin \phi_x \sin \phi_y \cos(\theta_x - \theta_y)) \\
&& + (v_2^x)^2 (v_2^y)^2P_l'(h)(\sin \phi_x \sin \phi_y + \cos \phi_x \cos \phi_y \cos(\theta_x - \theta_y))
\end{eqnarray*}

\begin{eqnarray*}
&& = \mbox{final} : v_1^x \frac{\partial v_1^x}{\partial \theta_x } v_1^x \frac{\partial v_1^x}{\partial \theta_x} \sin^2 \phi_x P_l'(1) + 2 (v_1^x)^4 \sin^4 \phi_x P_l''(1) + (v_1^x)^4 \sin^4 \phi_x P_l''(1) \\
&& + (v_1^x)^4 \sin^2 \phi_x P_l'(1) - (v_1^x)^3 \frac{\partial v_2^x}{\partial \theta_x} \sin \phi_x \cos \phi_x P_l'(1) + (v_1^x)^2 \frac{\partial v_1^x}{\partial \theta_x} v_2^x \sin \phi_x \cos \phi_x P_l'(1) \\
&& + v_1^x \frac{\partial v_1^x}{\partial \theta_x} \frac{\partial v_1^x}{\partial \phi_x} v_2^x \sin^2 \phi_x P_l'(1) + (v_1^x)^2 \frac{\partial v_1^x}{\partial \theta_x} v_2^x \sin \phi_x \cos \phi_x P_l'(1) - (v_1^x)^2 v_2^x \frac{\partial v_2^x}{\partial \phi_x} \sin \phi_x \cos \phi_x P_l'(1)  \\
&& + (v_1^x)^2 (v_2^x)^2  \sin^2 \phi_x P_l''(1) + (v_1^x)^2 (v_2^x)^2 \sin^2 \phi_x P_l'(1) + (v_1^x)^2 \frac{\partial v_1^x}{\partial \theta_x} v_2^x \cos \phi_x \sin \phi_x P_l'(1) \\
&& - (v_1^x)^3 \frac{\partial v_2^x}{\partial \theta_x} \cos \phi_x \sin \phi_x P_l'(1) + (v_1^x)^2 \left(\frac{\partial v_2^x}{\partial \theta_x}\right)^2 P_l'(1) + (v_1^x)^2 (v_2^x)^2 \sin^2 \phi_x P_l''(1) \\
&& + (v_1^x)^2(v_2^x)^2 \cos^2 \phi_x P_l'(1) + v_1^x \frac{\partial v_1^x}{\partial \phi_x}(v_2^x)^2 \cos \phi_x \sin \phi_x P_l'(1) + (v_1^x)^2 (v_2^x)^2 \sin^2 \phi_x P_l''(1)  \\
&& + (v_1^x)^2 (v_2^x)^2 \cos^2 \phi_x  P_l'(1) + v_1^x \frac{\partial v_2^x}{\partial \theta_x} v_2^x \frac{\partial v_2^x}{\partial \phi_x} P_l'(1)  \mbox{ (from the T1 portion)} \\
&& + v_2^x \frac{\partial v_1^x}{\partial \phi_x} v_1^x \frac{\partial v_1^x}{\partial \theta_x} \sin^2 \phi_x P_l'(1) + v_2^x (v_1^x)^2 \frac{\partial v_1^x}{\partial \theta_x} \cos \phi_x \sin \phi_x P_l'(1) + (v_1^x)^2(v_2^x)^2 \sin^2 \phi_x P_l''(1) \\
&& + (v_1^x)^2 (v_2^x)^2 \cos^2 \phi_x P_l'(1) + v_1^x \frac{\partial v_1^x}{\partial \phi_x} (v_2^x)^2 \sin \phi_x \cos \phi_x P_l'(1) + v_1^x \frac{\partial v_1^x}{\partial \phi_x} (v_2^x)^2 \cos \phi_x \sin \phi_x P_l'(1) \\
&& + \left( \frac{\partial v_1^x}{\partial \phi_x} \right)^2 (v_2^x)^2 \sin^2 \phi_x P_l'(1) + (v_1^x)^2 (v_2^x)^2 \sin^2 \phi_x P_l''(1) + v_1^x \frac{\partial v_1^x}{\partial \phi_x}(v_2^x)^2 \sin \phi_x \cos \phi_x P_l'(1) \\
&& + (v_1^x)^2 (v_2^x)^2 \cos^2 \phi_x P_l'(1) + (v_2^x)^2 (v_1^x)^2 \sin^2 \phi_x P_l''(1) - (v_1^x)^2 v_2^x \frac{\partial v_2^x}{\partial \phi_x} \cos \phi_x \sin \phi_x P_l'(1) \\
&& + (v_1^x)^2 (v_2^x)^2 \sin^2 \phi_x P_l'(1)  + v_1^x v_2^x \frac{\partial v_2^x}{\partial \phi_x} \frac{\partial v_2^x}{\partial \theta_x} P_l'(1) + \left( \frac{\partial v_2^x}{\partial \phi_x} \right)^2(v_2^x)^2 P_l'(1) + 2 (v_2^x)^4 P_l''(1) \\
&& + (v_2^x)^4 P_l''(1) + (v_2^x)^4 P_l'(1) \mbox{ (from the T2 portion)}.
\end{eqnarray*}

\end{appendix}

\end{document}